\newtheorem{theorem}{Theorem}[section]
\newtheorem{corollary}[theorem]{Corollary}
\newtheorem{rem}[theorem]{Remark}
\newcommand{\qed}{\hfill $\square$\medskip}
\begin{document}
\title{$2$-Restricted Optimal Pebbling Number of Some Graphs}

 %\author{Ali Delavar Khalafi$^{}$\footnote{Corresponding author}
\author{J.G. Dehqan \and S. Alikhani$^{}$\footnote{Corresponding author} \and A. Delavar Khalafi     \and F. Aghaei}

\date{\today}

\maketitle

\begin{center}

   Department of Mathematical Sciences, Yazd University, 89195-741, Yazd, Iran\\
\medskip

  {\tt dehqanm@stu.yazd.ac.ir}\\
  {\tt alikhani@yazd.ac.ir}\\
   {\tt delavarkh@yazd.ac.ir}\\
{\tt aghaeefatemeh29@gmail.com}\\

\end{center}

\begin{abstract}
 Let $G=(V,E)$ be a simple graph. A pebbling configuration on $G$ is a function $f:V\rightarrow \mathbb{N}\cup \{0\}$ that assigns a non-negative integer number of pebbles to each vertex. The weight of a configuration $f$ is $w(f)=\sum_{u\in V}f(u)$,  the total number of pebbles.
  A pebbling move consists of removing two pebbles from a vertex $u$ and placing one pebble on an adjacent vertex $v$. A configuration $f$ is a $t$-restricted pebbling configuration ($t$RPC) if no vertex has more than $t$ pebbles. The $t$-restricted optimal pebbling number $\pi_t^*(G)$ is the minimum weight of a $t$RPC on $G$ that allows any vertex to be reached by a sequence of pebbling moves. 
 The distinguishing number $D(G)$ is the minimum number of colors needed to label the vertices of $G$ such that the only automorphism preserving the coloring is the trivial one (i.e., the identity map).  
 In this paper, we investigate the $2$-restricted optimal pebbling number of 
 trees $T$ with $D(T)=2$ and radius at most $2$ and enumerate their $2$-restricted optimal pebbling configurations.
 Also we study the $2$-restricted optimal pebbling number of some graphs that are of importance in chemistry such as some alkanes.  
\end{abstract}

\noindent{\bf Keywords:} Pebbling number, Optimal pebbling number, $t$-restricted optimal pebbling number, $2$-restricted optimal pebbling configuration.

\medskip
\noindent{\bf AMS Subj.\ Class.}: 05C30, 05C69.  

\section{Introduction and Definitions}
%%%%%%%%%%%%%%%%%%%%%%%%%%%%%%%%%%%%%%%%%%%%%%%%%%%%%%%%%%%%%%%%%%%%%%%%%%%%%%%%%
%%%%%%%%%%%%%%%%%%%%%%%%%%%%%%%%%%%%%%%%%%%%%%%%%%%%%%%%%%%%%%%%%%%%%%%%%%%%%%%%%

Graph pebbling is like a number of network models, including network flow, transportation,
and supply chain, in that one must move some commodity from a set of sources to a set
of sinks optimally according to certain constraints. Network flow constraints restrict flow
along edges and conserve flow through vertices, and the goal is to maximize the amount
of commodity reaching the sinks. The transportation model includes per unit costs along
edges and aims to minimize the total cost of shipments that satisfy the source supplies and
sink demands. At its simplest, the supply chain model ignores transportation costs while
seeking to satisfy demands with minimum inventory. The graph pebbling model introduced
by Chung \cite{Chung} also tries to meet demands with minimum inventory, but constrains movement
across an edge by the loss of the commodity itself, much like an oil tanker using up the fuel
it transports.

Let $G = (V, E)$ be a simple graph. Let
$ V \rightarrow  \mathbb{N} \cup \{ 0 \}$ 
be a function that assigns to each vertex $u \in V$ a nonnegative integer $f(u)$. We  say that $u$ has been assigned $f(u)$ pebbles. 
Let $w (f) = \sum_{u \in V}{f(u)}$ equal the total number of pebbles assigned by the function $f$ and that $f$ is a pebbling configuration. 
A pebbling move consists of removing two pebbles from a vertex
$u \in V $ and then adding one pebble to an adjacent vertex $v \in N(u)$. A pebbling configuration $f$ is said to be solvable if for
every vertex $v$, there exists a sequence (possibly empty) of pebbling moves that results in a pebble on $v$.
The following definition appears in many papers in graph pebbling. The pebbling number $\pi(G)$ equals the minimum number
 $k$ such that every pebbling configuration $ V \rightarrow  \mathbb{N} \cup \{ 0 \}$  with $w(f) = k $ is solvable. Thus, the central focus of graph
pebbling is to determine a minimum number of pebbles so that no matter how they are placed on the vertices of a graph $G$, 
there will always be a sequence of pebbling moves that can move at least one pebble to any specified vertex of a graph $G$.

 The concept of pebbling was introduced in the literature by Chung in \cite{Chung}, where she proved that the pebbling number of the $n$-cube equals $2^n$. This result was used to give an alternate proof of a number theoretic theorem of Lemke and Kleitman \cite{Lemke}. Other applications of graph pebbling might include transportation of material. For example, in percolation theory in physics one considers pouring a liquid through a porous substrate. In the process of doing this some of the liquid is absorbed by the substrate. One then considers the probability that some liquid will flow all the way through the substrate. The loss of liquid would correspond to discarding one of the two pebbles in a pebbling move; while the liquid flowing through could be measured by covering the vertices. For another example, if a pebbling move is viewed as a transportation problem, one desires to move a unit of some object from a vertex $u$ to an adjacent vertex $v$ with a transportation cost of one unit, e.g. a gallon of gas.  

Pachtor et al. \cite{Pachtor} defined the optimal pebbling number $\pi^{*}(G)$ to be the minimum weight of a solvable pebbling
configuration of G. A solvable pebbling configuration of $G$ with weight $\pi^{*}(G)$ is called a $\pi^{*}$-configuration. Optimal pebbling
was studied further in \cite{Bunde,Friedman,Fu1,Fu2,Herscovici1,Herscovici2, Moews,Muntz,Shiue,Ye}. The decision problem associated with computing the optimal pebbling number was shown to be NP-Complete in \cite{Milans}.

in this paper, we consider a generalization of the optimal pebbling number.
 We say that a pebbling configuration $f$ is a $t$-restricted pebbling configuration (abbreviated tRPC) if $f(u) \le t$ for all  $u \in V$.
 We define the $t$-restricted optimal pebbling number $\pi_t ^*(G)$ to be the minimum weight of a solvable tRPC on $G$. If $f$ is a solvable tRPC on $G$ with $w(f) = \pi ^*(G)$, then $f$ is called a $\pi_t ^*$-configuration of $G$.

In graph theory, a {\em dominating set} of a graph $G$ is a subset $S$ of the vertex set $V$ such that every vertex in $V\setminus S$ is 
adjacent to at least one vertex in $S$. The minimum size of dominating sets of $G$ is called the domination number of $G$ and is denoted
 by $\gamma(G)$. 
Dominating sets and domination numbers are well-studied subjects in graph theory and to delve into the topic of domination in graphs,
 readers can refer to books such as \cite{10,11,11',12}. Roman domination number is denoted by $\gamma_R(G)$ and by its definition
 we see that for any graph $G$, $\pi_2^*(G)\leq \gamma_R(G)$ and since
 $\gamma_R(G)\leq 2\gamma(G)$, so $\pi_2^*(G)\leq 2\gamma(G)$.  For the path $P_n$, 
$\pi_2^*(P_n)=\pi^*(P_n)=\gamma_R(P_n)=\lceil\frac{2n}{3}\rceil.$
It is interesting that to characterize graphs $G$ with small number $\pi_2^*(G)$ we need to consider the domination and total domination
 number of $G$. To see the characterization of graphs $G$ with $\pi_2^*(G)=2,3,4, 5$ refer to  \cite{Alikhani,Chellali}.
 In \cite{Chellali} proved that if $T$ is a tree of order $n \ge 3$ with $l$ leaves, then $\pi_2^*(T) \le n-l+1$, and this bound is sharp. 
 Also it has proved that for any nontrivial tree $T$ of order $n$,  $\pi_2^*(T) \le \lceil \frac{5n}{7} \rceil$.

Distinguishing labeling was first defined by Albertson and Collins \cite{Albertson} for graphs. 
A labeling of a graph $G$, $\phi : V (G ) \rightarrow \{1,2,\cdots,r \}$, is said to be $r$-distinguishing if no nontrivial automorphism of $G$ 
preserves all the vertex labels.
The {\em distinguishing number} $D(G)$ of a graph $G$ is the smallest $r$ such that $G$ admits a distinguishing $r$-labeling. 

Alkanes are important raw materials of the chemical industry and the principal constituent of gasoline and lubricating oils.
 For example Natural gas mainly contains methane and ethane and is used for heating and cooking purposes and for power utilities
 (gas turbines). An alkane consists of hydrogen and carbon atoms arranged in a tree structure in which all the carbon–carbon bonds
 are single. Alkanes have the general chemical formula $C_nH_{2n+2}$.

 \medskip 
 In the next section, we investigate the $2$-restricted optimal pebbling number of 
 trees $T$ with $D(T)=2$ and radius at most $2$ and enumerate their $2$-restricted optimal pebbling configurations
 We study the $2$-restricted optimal pebbling number of some graphs that are of importance in chemistry such as
 some alkanes and dendrimers in Section 3.

\section{Results for trees $T$ with $D(T)=2$}
 In this section we compute the $2$-restricted optimal pebbling number of trees whose distinguishing  number is $2$ and radius at most is $2$.  

\begin{figure}[h!]
\begin{center}
\includegraphics[scale=0.6]{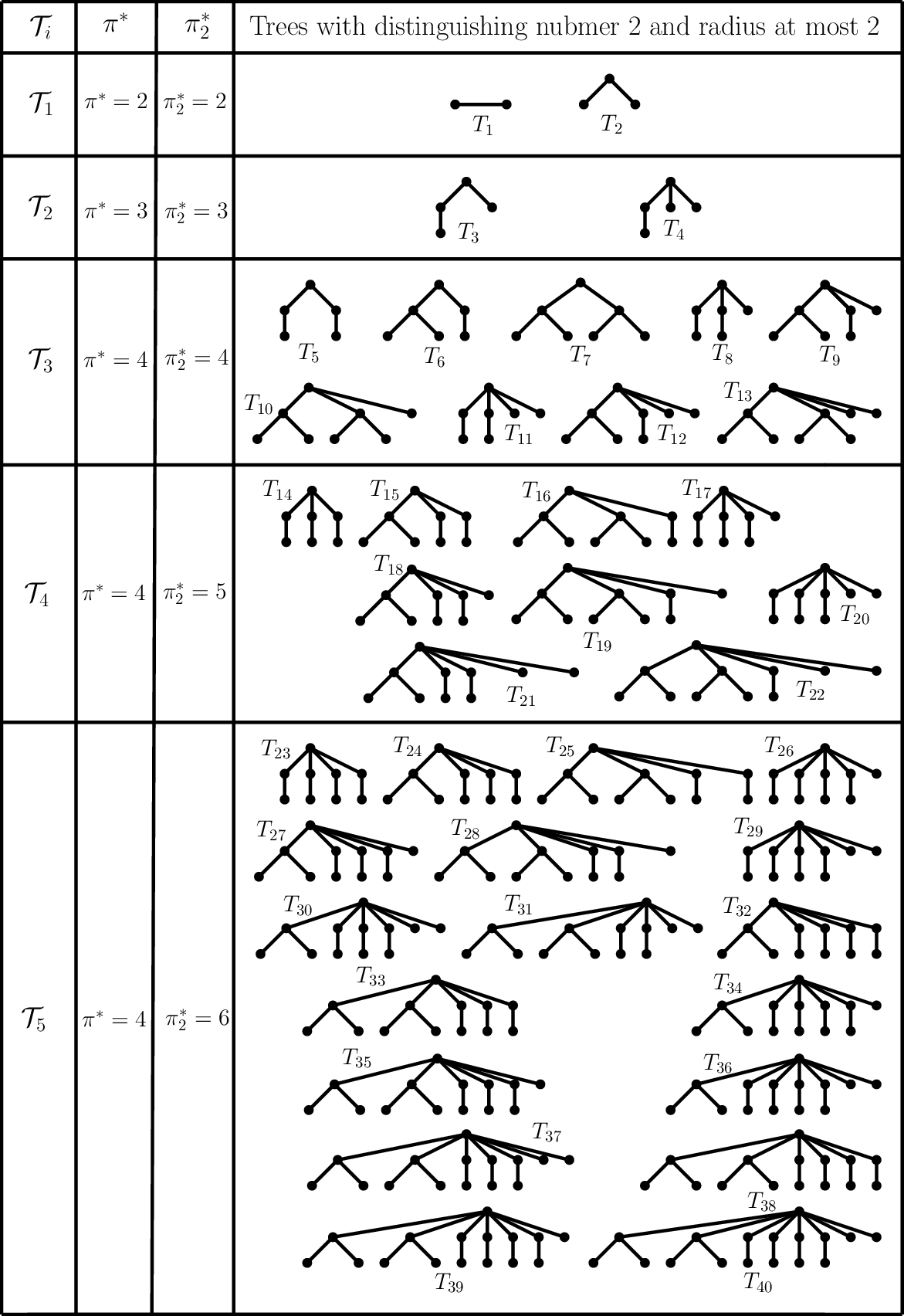}
\caption {2-restricted optimal pebbling number.}
 \label{2}
 \end{center}
\end{figure} 
Let $\mathcal{T}=\bigcup_{i=1}^6\mathcal{T}_i$ denote the family of trees $T$ with $D(T)=2$ and  radius at most $2$ (see Figure \ref{2}).

 \begin{theorem}
  If $T$ is a tree in $\mathcal{T}_i $ for $i=1,2,3$, then $ \pi_2 ^*(T)=\pi^*(T)$.
  \end{theorem}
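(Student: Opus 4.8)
Since a solvable $2$RPC is, in particular, a solvable pebbling configuration, the minimum weight over all solvable $2$RPCs cannot be smaller than the minimum weight over all solvable configurations; hence $\pi_2^*(T)\ge \pi^*(T)$ for every graph, and the whole content of the theorem is the reverse inequality $\pi_2^*(T)\le \pi^*(T)$. So for each tree $T$ in $\mathcal{T}_1\cup\mathcal{T}_2\cup\mathcal{T}_3$ it suffices to exhibit a solvable $2$RPC whose weight equals $\pi^*(T)$. My plan is to read off the shape of the trees in these three families from Figure~\ref{2}, compute $\pi^*(T)$ exactly, and then write down a matching $2$RPC by hand.

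Computing $\pi^*(T)$ is short. Every $T\in\mathcal{T}$ has radius at most $2$, so four pebbles on a center vertex $c$ already solve $T$ and $\pi^*(T)\le 4$. The handful of tiny members ($P_2$, $P_3$, $P_4$, and the like) are dealt with directly; in every other case there is a vertex at distance $2$ from $c$, so no configuration of weight $2$ is solvable and $3\le\pi^*(T)\le 4$. Which value occurs is governed by the number $b$ of \emph{branch vertices} of $T$ --- the neighbours of $c$ that themselves carry a leaf --- the upper bound coming from distributing the pebbles between $c$ and the branch vertices, and the matching lower bound from a direct check that no lighter configuration can reach every vertex.

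The construction is the heart of the argument, and it is here that $\mathcal{T}_1,\mathcal{T}_2,\mathcal{T}_3$ differ from $\mathcal{T}_4,\mathcal{T}_5,\mathcal{T}_6$: in the first three families $b$ is small, and one checks that the configuration placing two pebbles on $c$ and one pebble on each branch vertex (or, when $\pi^*(T)=3$, two pebbles on a branch vertex and one on $c$) is solvable and has weight exactly $\pi^*(T)$. Every such configuration is automatically a $2$RPC. Solvability is verified by cases on the target $v$: if $v=c$ or $v\in N(c)$, use the two pebbles sitting on, or funnelled into, $c$; if $v$ is a leaf at distance $2$ hanging from a branch vertex $m$, then either $m$ already holds two pebbles and we send one to $v$, or we move a pebble from $c$ to $m$ and combine it with $m$'s own pebble before sending one to $v$.

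The step I expect to be the real obstacle is the bookkeeping that makes this work \emph{uniformly}: one must confirm that for every tree permitted in $\mathcal{T}_1,\mathcal{T}_2,\mathcal{T}_3$, some weight-$\pi^*(T)$ configuration simultaneously ``covers'' all the distance-$2$ leaves without ever stacking three or more pebbles on one vertex. This is precisely the property that fails in $\mathcal{T}_4,\mathcal{T}_5,\mathcal{T}_6$, where $b$ is large and every optimal configuration must pile at least three pebbles on the center; so the verification has to be carried out separately for $i=1,2,3$ from the explicit list of trees in Figure~\ref{2}.
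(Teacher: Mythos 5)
Your proposal takes essentially the same route as the paper: the paper likewise dispatches the inequality $\pi^*(T)\le\pi_2^*(T)$ as trivial and then exhibits exactly the configuration you describe---two pebbles on the root and one pebble on each first-level vertex that has children---as a solvable $2$RPC of weight $\pi^*(T)$. Your extra case analysis verifying solvability and the value of $\pi^*(T)$ is detail the paper omits, but the underlying argument is the same.
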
 
  \begin{proof}  
  	Since $\pi^*(T)\leqslant\pi_2 ^*(T)$ for  every graph $T$, so by placing two pebbles on the root and a pebble on the 
  vertices in the level one that have children, we have $2$-restricted optimal pebbling configuration with weight total number  $\pi^*(T)$.
  \qed
  \end{proof}
 
  \begin{theorem}
   $\pi_2^*(T)=5$, for the trees  $T \in \mathcal{T}_4$.
\end{theorem}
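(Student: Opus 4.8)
The plan is to prove the two inequalities $\pi_2^*(T)\le 5$ and $\pi_2^*(T)\ge 5$ separately. Throughout, let $r$ be the (unique) central vertex of $T$, let $v_1,v_2,v_3$ be the neighbours of $r$ that are not leaves --- there are exactly three of them by the description of $\mathcal{T}_4$ in Figure \ref{2} --- and let $S_i$ denote the set consisting of $v_i$ together with its (one or two) leaf-children.

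For the upper bound I would simply display one configuration: put $f(r)=2$, $f(v_1)=f(v_2)=f(v_3)=1$, and $f\equiv 0$ elsewhere. This is a $2$RPC of weight $5$. It is solvable because $r$ and each $v_i$ already hold a pebble, each leaf adjacent to $r$ is reached by a single move from $r$, and each leaf at distance $2$ from $r$ lies below some $v_i$, so the move $r\to v_i$ brings $v_i$ up to two pebbles and a second move reaches that leaf. Hence $\pi_2^*(T)\le 5$.

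For the lower bound I would argue by contradiction: assume $f$ is a solvable $2$RPC with $w(f)\le 4$. Step one is the observation that each ``long'' branch must be occupied: if $f$ puts no pebble on $S_i$, then --- since $f$ is a $2$RPC of total weight at most $4$ --- at most three pebbles can ever be accumulated on $r$, so at most one pebble can be moved onto $v_i$, which is too few to reach the leaf-child of $v_i$; hence $f(S_1),f(S_2),f(S_3)\ge 1$. Because $w(f)\le 4$, this forces $f(r)+\sum_{p}f(p)\le 1$, the sum running over the (at most two) leaves adjacent to $r$, and it forces the profile $(f(S_1),f(S_2),f(S_3))$ to be $(1,1,1)$ or a permutation of $(2,1,1)$. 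Step two refines this: if some $S_i$ carries its single pebble on a leaf rather than on $v_i$, then $v_i$ cannot be reached (neither $r$ nor the leaves of $v_i$ ever attain two pebbles), so each weight-$1$ branch has its pebble on $v_i$; and if $f(S_1)=2$ then, by the same counting as in step one, $r$ is reachable only when $f(v_1)=2$. Step three is a finite check: the configurations that survive steps one and two are profile $(1,1,1)$ with at most one extra pebble on $r$ or on a leaf of $r$, and profile $(2,1,1)$ with $f(v_1)=2$ and one pebble on each of $v_2,v_3$; in every one of these some vertex --- the centre $r$, some $v_i$, a leaf below some $v_i$, or a leaf of $r$ --- turns out to be unreachable. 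This contradiction gives $\pi_2^*(T)\ge 5$, and together with the upper bound, $\pi_2^*(T)=5$.

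The step I expect to be the crux is the pebble-counting used in steps one and two: one must verify, uniformly over whether each $v_i$ has one or two leaf-children and over the number of leaves adjacent to $r$, that a $2$RPC of weight at most $4$ can deliver at most one pebble to a prescribed $v_i$ once $S_i$ is empty. Once this ``at most one'' bound is in hand, the remainder of the lower-bound argument is the short case analysis of step three. One further caveat to keep in mind: the usual necessary condition $\sum_u f(u)\,2^{-d(u,x)}\ge 1$ for reaching a vertex $x$ is not sufficient here, because single pebbles sitting on distinct far-apart vertices cannot be merged, so the lower bound has to be carried out with honest sequences of pebbling moves rather than with the weight function alone.
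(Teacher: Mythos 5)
Your proposal is correct, and your upper bound (two pebbles on the centre $r$, one on each non-leaf neighbour $v_i$) is exactly the configuration the paper exhibits. The lower bound, however, is argued by a genuinely different route. The paper views a tree of $\mathcal{T}_4$ as a tree of $\mathcal{T}_3$ with a path of two edges attached at the root and gives a two-line extension argument: the root can accumulate at most two pebbles under a weight-$4$ configuration, so the far end of the new path cannot be solved and a fifth pebble is forced. Your argument is self-contained: it never invokes the $\mathcal{T}_3$ result, treats the three long branches symmetrically, shows each must carry a pebble, pins down where those pebbles must sit, and finishes with an exhaustive check. This costs more casework but buys rigour --- the paper's reduction does not explicitly address weight-$4$ configurations that place pebbles on the newly attached path, whereas your step one covers all placements uniformly; and your closing caveat is apt, since for a target such as a leaf below $v_1$ under the profile $(1,1,1)$ with one extra pebble on $r$ the quantity $\sum_u f(u)2^{-d(u,x)}$ equals exactly $1$ while the vertex is still unreachable, so the final check really does require reasoning with actual move sequences (in steps one and two you only use the weight bound in the legitimate, necessary-condition direction). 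The only soft spots are that step three is asserted rather than written out and that the structure of $\mathcal{T}_4$ (exactly three non-leaf neighbours of the centre, each with one or two leaf-children) is read off the figure; both are easily completed and consistent with the paper.
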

 \begin{proof}
 The trees in the family $\mathcal{T}_4$ are constructed by adding a path with two edges to the root of each tree in the family 
 $\mathcal{T}_3$. Since the root in any tree of family $\mathcal{T}_3$ can receive at most $2$ pebbles, to solve the new path vertices,
 we need at least one more pebble. Therefore $\pi_2^*(T)\geqslant 5$. Now, we consider the $2$-restricted optimal pebbling 
 configuration with $5$ pebbles, where $2$ pebbles are placed on the root and a pebble on the first-level vertices that have children.
So the result is obtained.
 \end{proof}
 
 \begin{theorem}
 $\pi_2^*(T)=6$,  for the trees  $T \in \mathcal{T}_5$.
\end{theorem}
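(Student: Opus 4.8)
The plan is to prove the two inequalities $\pi_2^*(T)\le 6$ and $\pi_2^*(T)\ge 6$ separately, following the style of the proofs of the preceding theorems. For the upper bound I would exhibit the explicit $2$RPC that places two pebbles on the root $r$ and one pebble on each first-level vertex that has a child; as one reads off from Figure~\ref{2}, every tree in $\mathcal{T}_5$ has exactly four such first-level vertices, so this configuration has weight $2+4=6$ and is a $2$RPC. It is solvable: $r$ and any pebbled vertex are reached trivially; an unpebbled first-level leaf-child $z$ of $r$ is reached by moving the two pebbles from $r$ onto $z$; and a second-level leaf $\ell$ with parent $v$ is reached by first moving the two pebbles from $r$ onto $v$, giving $v$ a total of two pebbles, and then moving one of them onto $\ell$. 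Hence $\pi_2^*(T)\le 6$.

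For the lower bound, suppose toward a contradiction that $f$ is a solvable $2$RPC of $T$ with $w(f)\le 5$. Single out the four pendant subpaths $r-v_i-\ell_i$ ($i=1,2,3,4$), so that the leaves $\ell_1,\dots,\ell_4$ are pairwise at distance $4$; the key local fact is that in order to reach $\ell_i$ one must have $f(\ell_i)\ge 1$ or else assemble two pebbles on $v_i$, while pebbles lying in a branch $j\ne i$ contribute to $v_i$ only after being pushed to $r$ (two pebbles per one) and then to $v_i$ (two per one). I would then split on $f(r)\in\{0,1,2\}$. If $f(r)=2$: an empty branch $v_i-\ell_i$ is unsolvable, since the single move $r\to v_i$ leaves one pebble on $v_i$ and none on $r$, and the at most three remaining pebbles spread over the other three branches can return at most one pebble to $r$, not the two needed; hence each branch carries a pebble and $w(f)\ge 2+4=6$, a contradiction. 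If $f(r)=1$: the same count forces a pebble on every branch, so $w(f)=5$ with one pebble on $r$ and exactly one on each branch; but then no vertex holds two pebbles, no pebbling move is available at all, and an unpebbled $v_i$ can never be reached. If $f(r)=0$: reaching $\ell_i$ when $f(\ell_i)=0$ forces $f(v_i)=2$ or a ``pump'' that needs two pebbles assembled on $r$ supplied by the other branches, and a short enumeration — organized by how many $v_i$ carry two pebbles and where the remaining pebbles sit — shows that the four leaves together with $r$ itself cannot all be covered from total weight $\le 5$.

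The step I expect to be the main obstacle is the case $f(r)=0$: unlike the other two it does not close after a one-line move count, and one must check the few ways in which two or three branches can jointly deliver pebbles to $r$ and verify that the remaining branch's leaf is then stranded. A tempting shortcut is to note that $\mathcal{T}_5$ is obtained from $\mathcal{T}_4$ by attaching one further length-$2$ path at the root and to argue, as in the proof of the $\mathcal{T}_4$ case, that the root can receive at most two pebbles so that solving the new pendant leaf costs at least one pebble beyond $\pi_2^*=5$; making that increment rigorous rather than heuristic would, however, still require an argument of the above type. Finally, if some members of $\mathcal{T}_5$ drawn in Figure~\ref{2} have a first-level vertex carrying two leaf-children, or leaf-children hanging at the root, the same scheme applies once the four pairwise-distance-$4$ leaves are fixed, with only the bookkeeping in each case redone.
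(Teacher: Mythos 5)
Your proposal is correct in substance but takes a genuinely different, and considerably more rigorous, route than the paper for the lower bound. The paper's own argument is precisely the ``tempting shortcut'' you flag at the end: it asserts that since each $T\in\mathcal{T}_5$ contains a tree of $\mathcal{T}_4$ as a subgraph and a $2$RPC can deliver at most $2$ pebbles to the root, one extra pebble beyond $\pi_2^*=5$ is needed; no case analysis is given, and (as you correctly observe) subgraph containment does not by itself yield monotonicity of $\pi_2^*$, so the paper's bound $\pi_2^*(T)\ge 6$ is really only a heuristic. Your replacement --- fixing the four pendant paths $r$--$v_i$--$\ell_i$, noting that a branch can export a pebble to $r$ only by holding two pebbles on $v_i$, and splitting on $f(r)\in\{0,1,2\}$ --- is sound: the counts in the cases $f(r)=2$ and $f(r)=1$ are right, and the $f(r)=0$ case, which you leave as ``a short enumeration,'' does close quickly (some branch carries at most one pebble by pigeonhole, covering its leaf then forces two other branches to hold $2$ on their $v_j$, which exhausts the five pebbles and strands the fourth leaf). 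That enumeration should be written out for completeness, but it is not a gap in the sense of a step that fails. For the upper bound the two proofs use different weight-$6$ configurations: you place $2$ on $r$ and $1$ on each of the four first-level vertices with children, while the paper places $2$ on $r$ and $2$ on each of two first-level vertices; both are solvable $2$RPCs of weight $6$ for the trees in question, though the paper's version has the advantage of not depending on there being exactly four first-level vertices with children. The only caveat is that your argument rests on reading the structure of $\mathcal{T}_5$ (four pairwise-distance-$4$ leaves hanging below the root) off Figure~\ref{2}; this is consistent with the paper's description of $\mathcal{T}_4$ and $\mathcal{T}_5$, and you correctly note how to adapt the bookkeeping if some members carry additional leaves.
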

\begin{proof}
Since trees in family $\mathcal{T}_5$ contain a subgraph of trees in family $\mathcal{T}_4$, and any $2$-restricted optimal pebbling 
configuration of tree  $T\in\mathcal{T}_4$ can reach at most $2$ pebbles on the root, therefore,  $\pi_2^*(T)\geqslant 6$. Now we 
put two pebbles on the root and  two pebbles on two vertices of the first-level. Therefore the result is obtained.
 \qed
 \end{proof}

 \begin{corollary}
If $T\in \mathcal{T}$, then $2\leq \pi_2^*(T)\leq 6$ and so $D(T)\leq \pi_2^*(T)$.
\end{corollary}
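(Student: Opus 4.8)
The plan is to assemble the corollary directly from the three theorems just proved, together with the first theorem and the definition of $\mathcal{T}$. Recall that $\mathcal{T}=\bigcup_{i=1}^6\mathcal{T}_i$, so any $T\in\mathcal{T}$ lies in one of the six subfamilies. For the upper bound $\pi_2^*(T)\le 6$, I would split into cases according to $i$: for $i=1,2,3$ we have $\pi_2^*(T)=\pi^*(T)$ by the first theorem, and one checks (from the explicit configurations exhibited there, namely two pebbles on the root and one on each first-level vertex with children, or directly from the structure of these small-radius trees) that this quantity is at most $5$; for $i=4$ it equals $5$; for $i=5$ it equals $6$; and for $i=6$ one argues as in the $\mathcal{T}_5$ case (or invokes the corresponding result for $\mathcal{T}_6$, which the figure and the surrounding discussion supply) that the value does not exceed $6$. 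In every case the bound $6$ holds.

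For the lower bound $\pi_2^*(T)\ge 2$, the point is simply that $\pi_2^*(T)=1$ is impossible for any graph on more than one vertex: a single pebble on one vertex cannot be moved anywhere, so it cannot reach a second vertex, and every tree in $\mathcal{T}$ has at least two vertices (indeed every member has a root with at least one child). Hence $\pi_2^*(T)\ge 2$. One could alternatively note that $\pi_2^*(T)\ge\gamma(T)\ge 1$ is not sharp enough, so the direct argument via unmovable single pebbles is the clean route.

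Finally, for the inequality $D(T)\le\pi_2^*(T)$: by hypothesis every $T\in\mathcal{T}$ satisfies $D(T)=2$, since $\mathcal{T}$ was defined as the family of trees with $D(T)=2$ and radius at most $2$. Combining $D(T)=2$ with the already-established $\pi_2^*(T)\ge 2$ gives $D(T)=2\le\pi_2^*(T)$ immediately. So this last clause is essentially free once the lower bound is in hand.

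The only step with any friction is verifying the upper bound $\pi_2^*(T)\le 5$ for $i=1,2,3$ and $\le 6$ for $i=6$, i.e.\ making sure the configurations described in the proofs of the earlier theorems (and the analogous one for $\mathcal{T}_6$) genuinely have weight at most $5$ or $6$; this requires reading off the structure of the six families from Figure~\ref{2}, but it is routine bookkeeping rather than a real obstacle. Everything else is a one-line consequence of the preceding theorems and the definition of $\mathcal{T}$.
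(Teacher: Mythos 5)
Your proposal matches the paper's (implicit) argument: the corollary is stated there without proof, as an immediate consequence of the preceding theorems, and your assembly --- upper bound from the case-by-case values of $\pi_2^*$ on the subfamilies, lower bound from the impossibility of solving a tree on at least two vertices with a single pebble, and $D(T)=2$ by the very definition of $\mathcal{T}$ --- is exactly that argument made explicit. The one point of friction you rightly flag, namely that no theorem in the paper actually covers $\mathcal{T}_6$, is a gap in the paper itself rather than in your reasoning.
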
 

The study of the number of $2$-restricted optimal pebbling configurations of a graph $G$ is an interesting and a natural problem. 
 Let denote the number of the $2$-restricted optimal pebbling configurations of graph $G$ with $P_2^*$.  
 Here, we classify the trees  in the family $\mathcal{T}$  according the number of $2$-restricted optimal pebbling configurations.

   \begin{theorem}\label{np}
 	If $P_2^*$ is the number of the $2$-restricted optimal pebbling configurations of graph $G$, then for tree $T_i\in \mathcal{T}$ 
	(Figure \ref{2}) ($1\leq i\leq 40$) we have
 \begin{enumerate}
 \item[(i)]
 The only tree in the family $\mathcal{T}$ with unique $2$-restricted optimal pebbling configuration is the tree $T_2$ which is a star graph. In other word, $P_2^*(T_2)=1$.
 \item[(ii)] 
For $T\in \{T_1,T_4\}$, $P_2^*(T)=3$, and for  $T\in \{T_3,T_7,T_{10},T_{13}\}$,  $P_2^*(T)=4$. 
\item[(iii)]
For $T\in \{T_{9},T_{12}\}$, $P_2^*(T)=6$,  $P_2^*(T_6)=8$, for $T\in \{T_8,T_{11}\}$, $P_2^*(T)=9$, and for $T\in \{T_{32},T_{33}\}$,  $P_2^*(T)=10$.
\item[(iv)]
For $T\in \{T_{16},T_{19},T_{22}\}$,  $P_2^*(T)=11$, $P_2^*(T_5)=13$, for $T\in \{T_{34},T_{35},T_{38}\}$, $P_2^*(T)=15$,
\item[(v)]
 For $T\in \{T_{15},T_{18},T_{21}\}$ , $P_2^*(T)=17$,
 for  $T\in \{T_{36},T_{37},T_{39}\}$ , $P_2^*(T)=21$, and for $T\in \{T_{14},T_{17},T_{20}\}$, $P_2^*(T)=26$, 

\item[(vi)] 
$P_2^*(T_{40})=28$, $P_2^*(T_{25})=35$, $P_2^*(T_{28})=39$.

\item[(vii)] 
$P_2^*(T_{31})=44$, $P_2^*(T_{24})=52$
$P_2^*(T_{27})=56$, $P_2^*(T_{30})=61$.

\item[(viii)] 
$P_2^*(T_{23})=78$, $P_2^*(T_{26})=82$
$P_2^*(T_{29})=87$.
 \end{enumerate}
 \end{theorem}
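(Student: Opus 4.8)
The statement is a finite case analysis over the forty trees $T_1,\dots,T_{40}$ of Figure~\ref{2}, carried out in two stages: first pin down $\pi_2^*(T_i)$ for every $i$, then enumerate the solvable $2$RPCs of that weight. For the first stage one uses the earlier theorems of this section: if $T_i\in\mathcal{T}_1\cup\mathcal{T}_2\cup\mathcal{T}_3$ then $\pi_2^*(T_i)=\pi^*(T_i)$, a value one computes directly and which is attained by the configuration described there (two pebbles on the center, one on each first-level vertex with children); if $T_i\in\mathcal{T}_4$ then $\pi_2^*(T_i)=5$ and if $T_i\in\mathcal{T}_5$ then $\pi_2^*(T_i)=6$; and for $T_i\in\mathcal{T}_6$ the Corollary above gives $\pi_2^*(T_i)\le 6$, while a lower-bound argument of the same type (such a tree contains a member of $\mathcal{T}_5$ as a subtree) forces $\pi_2^*(T_i)=6$. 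In every case the target weight $k=\pi_2^*(T_i)$ lies in $\{2,\dots,6\}$.

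For the second stage, fix $T_i$ and its target weight $k$. By definition $P_2^*(T_i)$ is the number of solvable $2$RPCs of weight $k$, so it suffices to run through all maps $f:V(T_i)\to\{0,1,2\}$ with $w(f)=k$ and test each for solvability. The hypothesis $\mathrm{rad}(T_i)\le 2$ is what makes the test mechanical: rooting $T_i$ at a center $r$, every vertex lies at level $0$, $1$ or $2$ and every level-$2$ vertex is a leaf, so reachability of a target reduces to a short (if slightly delicate) inequality in the restricted values $f(u)\in\{0,1,2\}$ --- a level-$0$ or level-$1$ vertex is reachable iff it carries a pebble or a neighbour can deliver one, and a level-$2$ leaf $\ell$ with parent $p$ is reachable iff $f(\ell)\ge 1$, or $f(p)\ge 2$, or $f(p)=1$ and one further pebble can be funnelled onto $p$ (from a full sibling leaf, or from $r$ once $r$ has gathered two pebbles), or $f(p)=0$ and two pebbles can be funnelled onto $p$. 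To keep the enumeration honest I would not list configurations one at a time but parametrise each by its weight vector on a set of orbit representatives of $\mathrm{Aut}(T_i)$ on $V(T_i)$ (branches of equal shape, leaves under a common parent, the pendant leaves at the center), count by a product of binomial coefficients how many genuine configurations realise each admissible profile, discard the non-solvable profiles, and sum. Starting from the trivial cases $T_1=P_2$, $T_2=P_3$, $T_3=P_4$, which already reproduce $P_2^*=3,1,4$, and proceeding through the remaining trees in order of size, one obtains all the numbers in (i)--(viii).

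There is no single hard idea here; the whole difficulty is the volume and accuracy of the bookkeeping, since each of the larger trees has several hundred $2$RPCs of the relevant weight and the configurations most easily mishandled are the ``thin'' ones, in which a pebble must reach its target through a chain of two or three moves. I would control this by (a) using the monotonicity of solvability --- any $2$RPC dominating a solvable one is itself solvable --- to reduce the solvability decisions to a small frontier; (b) using the $\mathrm{Aut}(T_i)$-orbit structure both to organise the count and as an internal consistency check, since two trees of the figure differing only by a feature invisible to the count must receive the same value of $P_2^*$, as in groupings like $P_2^*(T_{14})=P_2^*(T_{17})=P_2^*(T_{20})=26$; and (c) verifying the largest totals --- notably $P_2^*(T_{29})=87$, $P_2^*(T_{26})=82$ and $P_2^*(T_{23})=78$ --- by an independent brute-force enumeration, since an off-by-a-small-amount slip in the hand count is the most likely failure mode.
\qed
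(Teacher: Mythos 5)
The paper in fact supplies \emph{no} proof of this theorem: the statement is followed immediately by the corollary and by Figure~\ref{12}, which merely displays a few of the configurations being counted. So there is no argument of the authors' to compare yours against; your strategy --- determine $\pi_2^*(T_i)$ from the preceding theorems, then exhaustively enumerate the solvable $2$RPCs of that weight, organized by $\mathrm{Aut}(T_i)$-orbits --- is the only realistic one and is surely what the authors did off-stage. Your small cases check out (e.g.\ on $P_4$ exactly the four configurations $(0,2,1,0)$, $(0,1,2,0)$, $(0,2,0,1)$, $(1,0,2,0)$ of weight $3$ are solvable, giving $P_2^*(T_3)=4$).

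The gap is that your text is a \emph{plan} for a proof rather than a proof. The entire content of the theorem is forty specific integers, and you verify three of them ($T_1$, $T_2$, $T_3$) while deferring the other thirty-seven to ``bookkeeping'' that is never carried out; a finite case-check theorem is proved only when the cases are checked, and the values you yourself identify as most error-prone ($78$, $82$, $87$) are exactly the ones left unexamined. Two further points need tightening. First, your stage-one lower bound for $\mathcal{T}_6$ --- that containing a member of $\mathcal{T}_5$ as a subtree forces $\pi_2^*\geq 6$ --- is not automatic, since optimal pebbling numbers are not monotone under subgraph containment in general (a configuration on the larger tree may route pebbles through the extra vertices); the paper uses the same loose argument for $\mathcal{T}_4$ and $\mathcal{T}_5$, but if you are writing the proof the paper omitted you should close this hole, e.g.\ by arguing directly that a weight-$5$ configuration on a $\mathcal{T}_6$-tree cannot reach all leaves. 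Second, your leaf-reachability criterion (``one further pebble can be funnelled onto $p$'') is stated informally; since the whole enumeration hinges on this test, it should be proved as a lemma characterizing solvability of a $2$RPC on a rooted tree of height $2$ before being applied forty times.
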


 From the results in Theorem \ref{np} we have the following corollary: 
 \begin{corollary} 
 If $T\in \mathcal{T}$, then $1 \leq P_2^*(T)\leq 87$. 
\end{corollary}

Some of $2$-restricted optimal pebbling configurations of some trees have shown in Figure \ref{12}.

\begin{figure}[h!]
\begin{center}
\includegraphics[scale=0.5]{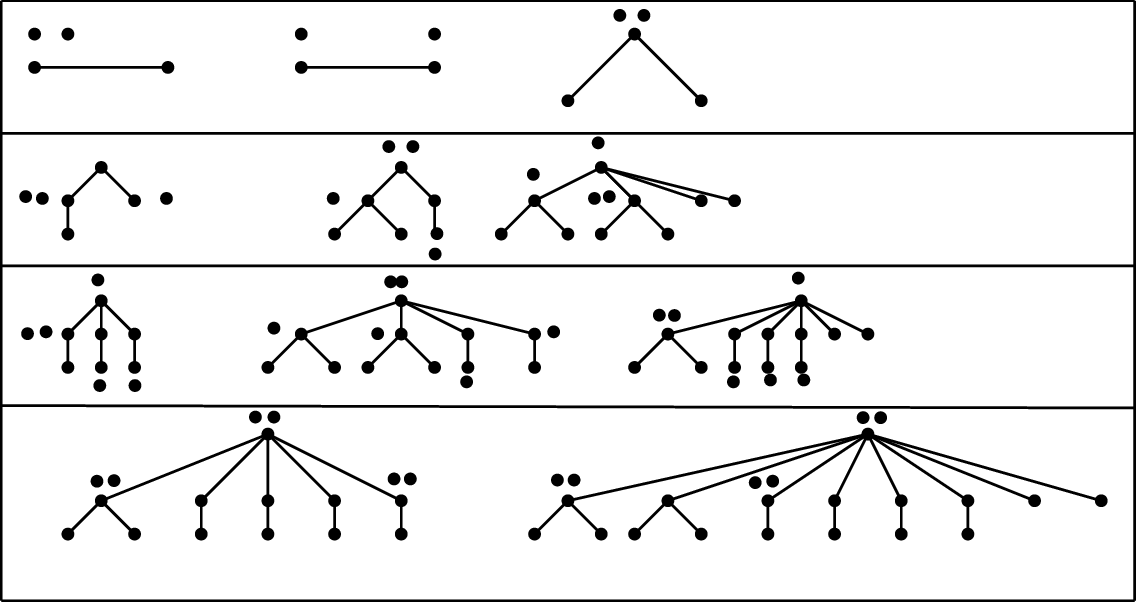}
\caption{The $2$-restricted optimal pebbling configurations of some trees $T\in \mathcal{T}$.}
\label{12}
 \end{center}
  \end{figure}

  \begin{theorem}
 Let $G$ be a connected graph of order  $n\geqslant 2$ and $f$ be a $2$-restricted optimal pebbling configuration. If $f(u)=1$  for some
 $u\in V(G)$, then there is at least two distinct  solvable 2RPC on the graph $G$.
\end{theorem}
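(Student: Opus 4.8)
The quickest route uses one elementary remark: a vertex holding exactly one pebble can never be the source of a pebbling move, so the pebble on $u$ is \emph{inert}, and in particular $f$ has a vertex --- namely $u$ --- carrying fewer than two pebbles. Hence let $g$ be the configuration obtained from $f$ by adding a second pebble on $u$. Then $g(u)=2$, so $g$ is again a $2$RPC; $g\ge f$ pointwise, so any sequence of moves that solves $f$ also solves $g$, whence $g$ is solvable; and $g\ne f$. Thus $f$ and $g$ are two distinct solvable $2$RPC on $G$, which is what is asserted. (Only the inequality $f(u)<2$ is used, so the hypotheses that $G$ be connected and that $n\ge 2$ are not needed for this reading.)

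One may reasonably hope for the stronger conclusion that $G$ then has at least two $\pi_2^*$-configurations, i.e.\ two distinct solvable $2$RPC of the minimum weight $\pi_2^*(G)$; I would approach that by \emph{relocating} the inert pebble rather than duplicating it. Since $G$ is connected and $n\ge2$, the vertex $u$ has a neighbour, and for a suitably chosen neighbour $v$ with $f(v)\le1$ let $g$ be the configuration obtained from $f$ by moving the pebble from $u$ to $v$. Then $g$ is a $2$RPC with $w(g)=\pi_2^*(G)$ and $g\ne f$, so everything reduces to showing that $g$ is solvable. The target $u$ is the easy part: if $f(v)=1$ then $g(v)=2$ and the single move $v\to u$ suffices, while if $f(v)=0$ one first drives a pebble onto $v$ along a minimum solution for the target $v$ --- such a solution never fires out of its own target, so it leaves the pebble just placed on $v$ untouched --- after which $v$ holds at least two pebbles and $v\to u$ finishes.

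The real difficulty is the choice of $v$ together with the solvability of the remaining targets $x\ne u$. One is tempted to argue that $f$ with the pebble at $u$ deleted already reaches every $x\ne u$, but this is \emph{false}: on the path $P_4$ with consecutive vertices $a,b,c,d$, the configuration with two pebbles on $b$, one pebble on $c$, and no pebbles on $a,d$ is a $\pi_2^*$-configuration, yet deleting the pebble at $c$ leaves $d$ unreachable. So the lone pebble on $u$ may genuinely be needed for other vertices, and $v$ must be chosen in the direction along which that pebble ``flows'': for each target $x$ all of whose solutions use the pebble at $u$, take an acyclic solution (by the no-cycle lemma) and read off the first move $u\to v_x$ it makes out of $u$; the crucial claim is that, because $w(f)$ is minimum, a single neighbour $v$ can play the role of $v_x$ for all such $x$ at once --- were the pebble on $u$ essential in two genuinely different directions, one could remove a pebble elsewhere and still keep every vertex reachable, contradicting the optimality of $f$. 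With such a uniform $v$ fixed, $g$ reaches each such $x$ by first delivering a pebble to $v$ and then replaying the original solution from the instant the pebble left $u$, and it reaches $u$ as above. The handful of degenerate local pictures at $u$ --- every neighbour of $u$ empty, or every neighbour of $u$ carrying two pebbles --- would require separate treatment, again leaning on the minimality of $w(f)$. I expect the ``uniform direction'' claim to be the one genuine obstacle; everything else is bookkeeping with the no-cycle lemma.
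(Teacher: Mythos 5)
Your first paragraph does prove the statement exactly as it is literally worded, but that wording is vacuous: \emph{every} graph admits many distinct solvable $2$RPCs (two pebbles on each vertex, the same minus one pebble, \dots), so under that reading the hypothesis $f(u)=1$ does no work. The surrounding text --- the corollary for odd $\pi_2^*(G)$ and the remark that $P_{3k}$ has a \emph{unique} optimal configuration of weight $2k$ --- makes clear that the intended claim is $P_2^*(G)\ge 2$, i.e.\ that $G$ has at least two distinct solvable $2$RPCs of the \emph{minimum} weight. Your configuration $g=f+\mathbf{1}_u$ has weight $\pi_2^*(G)+1$ and therefore says nothing about that. For the intended claim you offer only a sketch, and you yourself flag the ``uniform direction'' step as unproven; so, measured against what the theorem is actually asserting, the proposal has a genuine gap.

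The paper's own proof takes a different and more local route than your ``relocate the pebble on $u$'' plan: it first produces a vertex $v$ with $f(v)=2$ (if no vertex held two pebbles, no move could ever be made, forcing a pebble on every vertex and weight $n$, whereas $\pi_2^*(G)\le\gamma_R(G)\le 2\gamma(G)\le n$, with strict inequality for connected $G$ of order at least $3$), and then either swaps the values across an edge, $(f(v),f(u))=(2,1)\mapsto(1,2)$, or shifts a lone pebble onto an empty neighbour, $(1,0)\mapsto(0,1)$. Modifying the configuration at a vertex that already holds two pebbles sidesteps your ``which way does the lone pebble flow'' dilemma, and this is the idea your proposal is missing. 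That said, your scepticism is well placed: the paper never verifies that the swapped or shifted configuration is still solvable, and your $P_4$ example $(0,2,1,0)$ is precisely the kind of instance such a verification must handle (there the applicable case is the swap, which yields the solvable $(0,1,2,0)$, not the deletion/relocation you test); moreover, in the ``otherwise'' branch the vertex $u$ from the hypothesis need not have an empty neighbour at all, as $(0,2,0,1,1)$ on $P_5$ with $u$ the end-vertex shows. So both your sketch and the paper's argument ultimately rest on an unproved exchange lemma --- that these local modifications preserve solvability --- and supplying that lemma is what a complete proof requires.
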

\begin{proof}
  We have two distinct 2RPC for the connected graph of order two (fig \ref{12}).  Now let $f$ be a 2RPC on the connected graph $G$ of order $n>2$.
  Since $\pi^*_2\leqslant \gamma_R\leqslant 2\gamma\leqslant 2(\frac{n}{2})$, so $f(v)=2$ for some $v\in V(G)$. If  for some $u\in N(v)$, 
  $f(u)=1$. then by changing  $f(v)=1$ and $f(u)=2$, we have the new 2RPC on $G$. Otherwise there exists a vertex $w\in N(u)$ that  $f(u)=1$ and 
  $f(w)=0$, and so  by changing  $f(u)=0$ and $f(w)=1$, we have the new 2RPC on $G$.
\qed
\end{proof}
\begin{corollary}
If $\pi^*_2(G)=2k+1$ for $k\in \mathbb{N}$, then there is at least two distinct  solvable 2RPC on the graph $G$.
\end{corollary}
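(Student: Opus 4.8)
The plan is to combine an elementary parity observation with the theorem immediately preceding this corollary. First I would take a $\pi_2^*$-configuration $f$ of $G$, that is, a solvable $2$RPC with $w(f)=\pi_2^*(G)=2k+1$; such an $f$ exists by the definition of the $2$-restricted optimal pebbling number. Since $f$ is $2$-restricted, each value $f(v)$ lies in $\{0,1,2\}$.

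The key step is then a parity argument. If no vertex carried exactly one pebble, we would have $f(v)\in\{0,2\}$ for every $v\in V(G)$, so $w(f)=\sum_{v\in V(G)}f(v)$ would be a sum of even numbers and hence even, contradicting $w(f)=2k+1$. Therefore there exists a vertex $u\in V(G)$ with $f(u)=1$. At this point the preceding theorem applies to the pair $(f,u)$ and yields at least two distinct solvable $2$RPC on $G$, which is exactly the assertion.

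I do not expect a genuine obstacle; the only care needed is with the hypotheses inherited from the theorem. That theorem is stated for connected graphs $G$ of order $n\geqslant 2$, so the corollary should be read under the same connectivity assumption (or argued component by component). One should also observe that $k\geqslant 1$ is the intended range: the value $2k+1=1$ would force $G=K_1$, whose only solvable $2$RPC is unique, so the statement would be false at $k=0$; under the convention $\mathbb{N}=\{1,2,\dots\}$ we have $2k+1\geqslant 3$, which in particular forces $n=|V(G)|\geqslant 2$, so the order hypothesis of the theorem is automatically satisfied.
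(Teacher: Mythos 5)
Your proof is correct and follows exactly the route the paper intends: the corollary is deduced from the immediately preceding theorem via the parity observation that an odd total weight with all values in $\{0,1,2\}$ forces some vertex to carry exactly one pebble. Your added remarks on the inherited connectivity hypothesis and on the convention $k\geq 1$ are reasonable care that the paper leaves implicit.
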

  
  \begin{rem}
There exists unique 2RPC  for the path graph $P_{3k}$ with total weight $2k$.  
  \end{rem}
  
  \begin{figure}[h!]
\begin{center}
\includegraphics[scale=1]{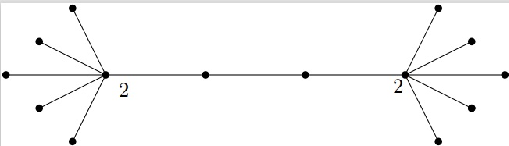}
\caption{The unique $2$-restricted optimal pebbling configuration of graph $G$  }
\label{unique}
 \end{center}
  \end{figure}

     \section{Results for some alkanes } 
    In this section we study the $2$-restricted optimal pebbling number and configurations of some alkanes.

    Alkanes are important raw materials of the chemical industry and the principal constituent of gasoline and lubricating oils. For example Natural gas mainly contains methane and ethane and is used for heating and cooking purposes and for power utilities (gas turbines).
    An alkane consists of hydrogen and carbon atoms arranged in a tree structure in which all the carbon–carbon bonds are single. Alkanes have the general chemical formula 
    $C_n H_{2n+2}$.
    
    In this section, we find the $2$-restricted optimal pebbling number of some Alkanes.
        Methane $(CH_4)$ is the simplest member of the alkane family and the simplest organic compound. Methane is a compound that has a tetrahedral structure and is formed by the
    bonding of four hydrogen atoms and one carbon atom. 
    Ethane is a chemical compound with the formula $C_2H_6$, a member of the hydrocarbon group alkane, and has two-carbon.
    From the $2$-restricted optimal pebbling configuration of $CH_4$ and $C_2H_6$ which have shown in Figures \ref{13} and \ref{14}, we have the following theorem for the $2$-restricted optimal pebbling number 
    of methane and ethane. 
    
   \begin{theorem}
   	\begin{enumerate}
   		 \item[(i)] $\pi_2^*(CH_4)=2$ and $P_2^*(CH_4)=1$. 
   		 
   		 \item[(ii)] $\pi_2^*(C_2H_6)=3$ and $P_2^*(C_2H_6)=2$. 
   		   	\end{enumerate}
        \end{theorem}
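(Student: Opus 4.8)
The first step is to fix the graph models: $CH_4$ is the star $K_{1,4}$ whose centre is the carbon and whose four leaves are the hydrogens, while $C_2H_6$ is the double star on eight vertices in which the two carbons $c_1,c_2$ are adjacent and each $c_i$ carries three pendant hydrogen leaves (these are the structures drawn in Figures~\ref{13} and~\ref{14}). With these models in hand, each statement splits into a value computation and a counting computation, and both are finite checks once the right configuration is exhibited.

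For part (i), the lower bound $\pi_2^*(CH_4)\ge 2$ is immediate: a $2$RPC of weight at most $1$ puts a pebble on at most one vertex and permits no pebbling move, so it cannot be solvable on a graph with $5$ vertices. For the upper bound I would place two pebbles on the carbon; this is a $2$RPC of weight $2$, it leaves pebbles on the carbon, and a single move reaches any chosen hydrogen, so it is solvable and $\pi_2^*(CH_4)=2$. For $P_2^*(CH_4)=1$ I would enumerate the weight-$2$ $2$RPCs: up to placement they are ``two pebbles on one vertex'' or ``one pebble on each of two vertices.'' The second kind admits no move and covers only two of the five vertices, hence is unsolvable; ``two pebbles on a leaf'' permits exactly one move, after which a single pebble sits on the carbon and nothing more can happen, so the remaining leaves are unreachable; thus ``two pebbles on the carbon'' is the unique solvable $\pi_2^*$-configuration.

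For part (ii) the argument has the same shape but with one extra layer of cases. To get $\pi_2^*(C_2H_6)\ge 3$, first discard weight $\le 1$ as before; then for weight $2$ I would run through the possibilities (two pebbles on a carbon, two pebbles on a hydrogen, or one pebble on each of two vertices) and observe that even in the most favourable case---two pebbles on, say, $c_1$---one can reach $c_1$, its three hydrogens, and $c_2$ in one move, but then only a single pebble ever reaches $c_2$, so the three hydrogens attached to $c_2$ stay unreachable; the other weight-$2$ configurations are strictly worse. For the upper bound I put two pebbles on $c_1$ and one on $c_2$: this is a $2$RPC of weight $3$; it covers $c_1$ and $c_2$ directly, reaches each hydrogen of $c_1$ in one move, and reaches a hydrogen $b$ of $c_2$ by first moving the pair from $c_1$ to $c_2$ (now $c_2$ holds two pebbles) and then moving to $b$; hence $\pi_2^*(C_2H_6)=3$. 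Finally, for $P_2^*(C_2H_6)=2$ I would enumerate the weight-$3$ $2$RPCs, which are ``one pebble on each of three vertices'' (no move possible, only three of eight vertices covered---unsolvable) or ``two pebbles on one vertex and one pebble on another.'' In the latter family, if the pair sits on a hydrogen the only move dumps it onto the incident carbon and one can never accumulate two pebbles on the opposite carbon, so that carbon's hydrogens are unreachable; if the pair sits on $c_1$ but the lone pebble sits on a hydrogen, again at most one pebble reaches $c_2$ and its hydrogens fail; so the only solvable weight-$3$ $2$RPCs are $(f(c_1),f(c_2))=(2,1)$ with empty hydrogens and its mirror image $(1,2)$, giving $P_2^*(C_2H_6)=2$.

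The conceptual content here is minimal; the real work---and essentially the only place an error could slip in---is the exhaustiveness of the finite case analyses behind the two counting claims, where I must make sure every weight-$2$ (respectively weight-$3$) $2$RPC other than the claimed optimal ones is genuinely shown to be unsolvable, and that the two ethane configurations are correctly counted as distinct (configurations being counted as functions on the labelled vertex set, not up to the automorphism swapping the two carbons). Symmetry reduces both enumerations to a handful of orbit representatives, so this is routine but should be written out carefully.
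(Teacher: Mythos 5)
Your proof is correct, and for part (ii) it takes a genuinely different route from the paper. The paper's lower bound for ethane comes from the monotonicity claim $\pi_2^*(P_4)\le \pi_2^*(C_2H_6)$ (viewing $P_4$ as a subtree through the two carbons) together with $\pi_2^*(P_4)=3$, and its upper bound is the general tree bound $\pi_2^*(T)\le n-l+1$ from Chellali et al., which gives exactly $8-6+1=3$; you instead do a direct, self-contained case analysis of all weight-$2$ configurations for the lower bound and exhibit the explicit configuration $(2,1)$ on the carbons for the upper bound. Your version is longer but avoids two things the paper leaves implicit: the subgraph monotonicity of $\pi_2^*$ (which is not obvious in general and deserves at least a remark) and the citation of an external bound. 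More importantly, the paper essentially does not prove the counting claims $P_2^*(CH_4)=1$ and $P_2^*(C_2H_6)=2$ at all --- it only points to Figures \ref{13} and \ref{14} --- whereas your exhaustive enumeration of the weight-$2$ and weight-$3$ $2$RPCs, organized by orbit representatives and showing each non-optimal configuration is unsolvable, is the argument that is actually needed there. Your closing caveat about counting configurations as functions on the labelled vertex set (so that $(2,1)$ and $(1,2)$ are distinct despite the carbon-swapping automorphism) is also the right convention to make explicit, since it is what makes the answer $2$ rather than $1$.
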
 
    
    \begin{proof}
	\begin{enumerate}
	\item[(i)]The molecule graph methane is star graph $K_{1,4}$, so $\pi^*_2 (CH_4)=2$ and $P_2^*(CH_4)=1$. 
	\item[(ii)] By considering subgraph $P_4$ of molecule graph ethane $C_2H_6$, we have $\pi^*_2(P_4)\leqslant\pi^*_2(C_2H_6) $. 
	Since $\pi^*_2(T)\leqslant n-l+1$ for any tree, so the result is obtained.
	\qed
	\end{enumerate}
    \end{proof}
    
    \begin{figure}[h!]
    	\begin{center}
    		\includegraphics[scale=0.5]{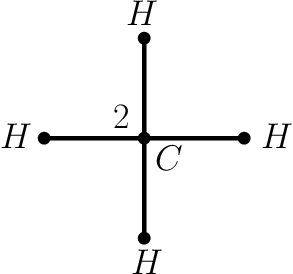}
    		\caption{Unique $2$-restricted optimal pebbling configuration of $CH_4$}
    		\label{13}
    	\end{center}
    \end{figure}
    
    \begin{figure}[ht!]
    	\begin{center}
    		\includegraphics[scale=0.5]{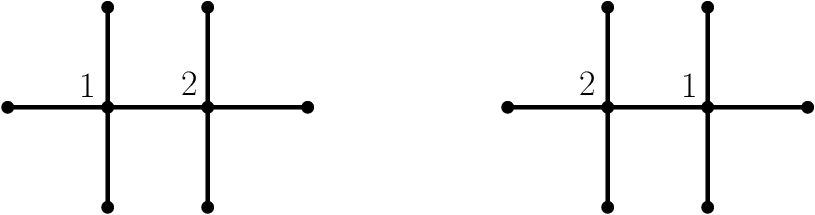}
    		\caption{Two 2-restricted optimal pebbling configurations of $C_2H_6$}
    		\label{14}
    	\end{center}
    \end{figure}
    Propane is a three-carbon alkane with the molecular formula $C_3H_8$.
    Propane is a natural gas (natural gas is about 90 percent methane, 5 percent propane, 5 percent other gases) and is a by-product of natural gas processing and crude oil refining, which is converted into a liquid under pressure.
    
    Butane is an organic compound with the formula $C_4H_{10}$. Butane is a saturated hydrocarbon containing 4 carbons, with an unbranched structure. Butane is primarily used as a gasoline mixture, either alone or in a propane mixture. It is also used as a feedstock for ethylene and butadiene production.

       \begin{figure}[h!]
       	\begin{center}
       		\includegraphics[scale=0.40]{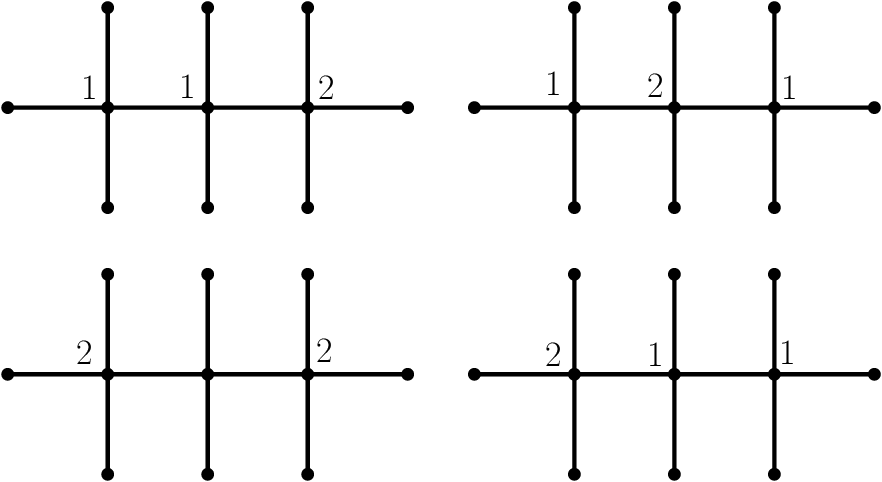}
       		\caption{Four 2-restricted optimal pebbling configurations of $C_3H_8$}
       		\label{15}
       	\end{center}
       \end{figure}
       
       \begin{figure}[h!]
       	\begin{center}
       		\includegraphics[scale=0.40]{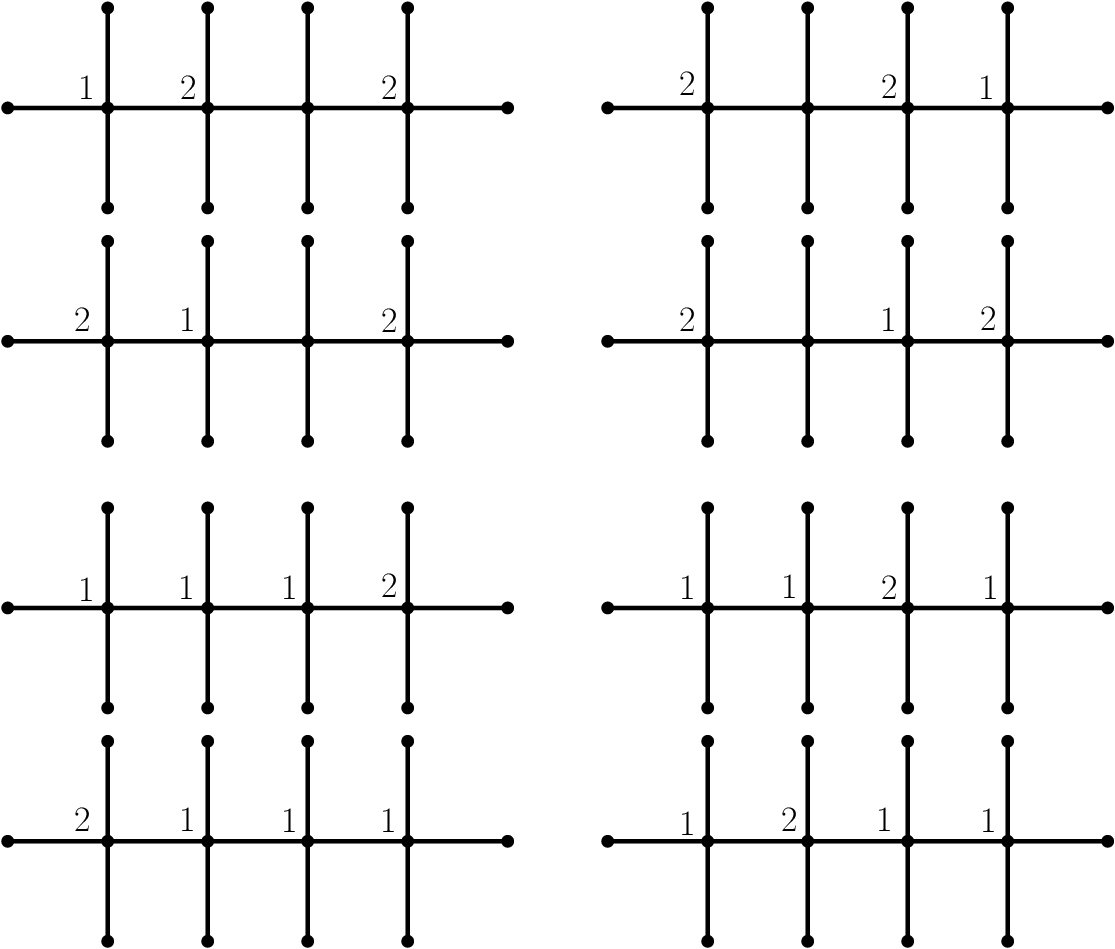}
       		\caption{Eight 2-restricted optimal pebbling configurations of $C_4H_{10}$}
       		\label{16}
       	\end{center}
       \end{figure}
    
     From the $2$-restricted optimal pebbling configuration of $C_3H_8$ and $C_4H_{10}$ which have shown in Figures \ref{15} and \ref{16}, we have the following theorem for the $2$-restricted optimal pebbling number 
     of  propane and butane and for the number of their $2$-restricted optimal pebbling configurations.  
     
     \begin{theorem}
     	\begin{enumerate}
     		\item[(i)] $\pi_2^*(C_3H_8)=4$ and $P_2^*(C_3H_8)=4$. 
     		
     		\item[(ii)] $\pi_2^*(C_4H_{10})=5$ and $P_2^*(C_4H_{10})=8$. 
     	\end{enumerate}
     \end{theorem}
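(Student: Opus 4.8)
The plan is to treat propane and butane one at a time: in each case I first pin down $\pi_2^*$ by a matching upper and lower bound, and then I enumerate the $2$-restricted optimal configurations by a finite search restricted to the carbon path, using the pictures of Figures~\ref{15} and~\ref{16} as a guide.

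\emph{Upper bounds.} These follow from the tree inequality $\pi_2^*(T)\le n-l+1$ recalled in Section~1: the molecular graph of propane has $n=11$ vertices and $l=8$ leaves, so $\pi_2^*(C_3H_8)\le 4$, while that of butane has $n=14$, $l=10$, so $\pi_2^*(C_4H_{10})\le 5$. Equivalently, any of the configurations drawn in Figures~\ref{15} and~\ref{16} realizes these bounds; the only thing to verify is that in each of them two pebbles can be accumulated on every carbon, which is exactly what is needed to fire a pebble onto each hydrogen leaf hanging from that carbon.

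\emph{Lower bounds.} For propane, the graph contains the path $h\!-\!C_1\!-\!C_2\!-\!C_3\!-\!h'$ on five vertices ($h$ a hydrogen of $C_1$, $h'$ a hydrogen of $C_3$) as a subtree; since $\pi_2^*(P_5)=\lceil 10/3\rceil=4$ and $\pi_2^*$ is monotone under passing to a subtree (the principle already used for $P_4\subseteq C_2H_6$), we get $\pi_2^*(C_3H_8)\ge 4$. For butane the analogous subtree is only a $P_6$, giving $\pi_2^*(C_4H_{10})\ge 4$, one short; so here I argue directly, and this is the delicate point. The workhorse is the remark that a hydrogen leaf $h$ of a carbon $C$ is reachable only if $f(h)\ge 1$ or if two pebbles can be gathered on $C$, together with the fact that the maximum number of pebbles gatherable on a vertex of a tree is the value $p(v)=f(v)+\sum_{u\text{ child of }v}\lfloor p(u)/2\rfloor$ of the standard bottom-up recursion. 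Assume $w(f)\le 4$. If all three hydrogens of $C_1$ carry a pebble, weight $\ge 3$ is frozen there and running the recursion from $C_1$ towards $C_4$ shows the remaining $\le 1$ pebble cannot place two pebbles on $C_4$; hence we may assume $C_1$, and symmetrically $C_4$, each has an empty hydrogen leaf. Those leaves can only be reached by gathering two pebbles on $C_1$ and two on $C_4$; but since $C_1$ and $C_4$ are at distance $3$, the cheapest way to make two pebbles gatherable on a carbon without outside help is to put $f=2$ on it, and comparing the two recursions (one rooted at $C_1$, one at $C_4$) against the budget $w(f)\le 4$ forces the carbon-path pattern $(2,0,0,2)$ as the only possibility --- and that pattern fails to gather two pebbles on $C_2$, leaving a hydrogen of $C_2$ unreachable. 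Thus $\pi_2^*(C_4H_{10})\ge 5$, so $\pi_2^*(C_3H_8)=4$ and $\pi_2^*(C_4H_{10})=5$.

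\emph{Counting the optimal configurations.} I claim every optimal $2$RPC has all its pebbles on the carbon path. A single pebble on a hydrogen leaf can never be moved, so deleting that leaf would leave a solvable $2$RPC of weight $\pi_2^*-1$ on a tree still containing the relevant $P_5$ (propane), resp.\ still subject to the weight-$4$ obstruction above (butane) --- impossible; and two pebbles on a hydrogen leaf can be traded, via the gathering recursion, for one pebble on its carbon without losing solvability, again contradicting minimality (the one exceptional arrangement is checked unsolvable by hand). The count then reduces to listing the weightings of $P_3$ of total weight $4$, resp.\ of $P_4$ of total weight $5$, with entries in $\{0,1,2\}$, and testing each with the recursion: for propane the solvable ones are $(2,0,2)$, $(1,2,1)$, $(2,1,1)$, $(1,1,2)$, so $P_2^*(C_3H_8)=4$; for butane, exploiting the mirror symmetry $C_i\leftrightarrow C_{5-i}$, the solvable ones are $(2,1,1,1)$, $(1,2,1,1)$, $(1,1,2,1)$, $(1,1,1,2)$ together with $(2,1,0,2)$, $(2,0,1,2)$, $(2,0,2,1)$, $(1,2,0,2)$, so $P_2^*(C_4H_{10})=8$. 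The main obstacle throughout is the butane lower bound $\pi_2^*(C_4H_{10})\ge 5$: unlike propane, no convenient small subtree of butane already has $\pi_2^*=5$, so one genuinely has to push the gathering recursion through all weight-$4$ distributions and separately dispose of the ones that park pebbles on hydrogen leaves; everything else is bookkeeping that the figures essentially display.
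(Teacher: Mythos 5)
Your proposal is correct in substance but follows a genuinely different route from the paper. The paper argues incrementally: it obtains $C_3H_8$ from $C_2H_6$ (and $C_4H_{10}$ from $C_3H_8$) by attaching a claw to a leaf, asserts that the attachment leaf ``can receive at most $1$ pebble'' so that one additional pebble is needed at each step, and then reads off both the upper bound and the count $P_2^*$ directly from the configurations displayed in Figures~\ref{15} and~\ref{16}. You instead get the upper bounds uniformly from the tree inequality $\pi_2^*(T)\le n-l+1$, the propane lower bound from the $P_5$ subtree together with $\pi_2^*(P_5)=4$ (the same subtree-monotonicity principle the paper itself invokes for $P_4\subseteq C_2H_6$), and the butane lower bound from a direct analysis of weight-$4$ configurations via the bottom-up gathering recursion; you then make the enumeration explicit by first reducing to configurations supported on the carbon skeleton and exhaustively testing the weight-$4$ (resp.\ weight-$5$) labelings of $P_3$ (resp.\ $P_4$), recovering the counts $4$ and $8$. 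Your approach buys a self-contained and checkable argument --- in particular your butane lower bound does not rest on the paper's unproved claim about what the smaller alkane's optimal configurations can deliver to a leaf, and your lists of solvable carbon-path patterns can be verified one by one --- at the cost of a bespoke case analysis. Two of your steps are still only sketched and deserve to be written out if this were to replace the paper's proof: the elimination of the weight-$4$ butane configurations that are not of the form $(2,\cdot,\cdot,2)$ (the floor in the gathering recursion, not just the fractional weight bound, is what kills cases like $f(C_1)=2$, $f(C_4)=f(C_3)=1$), and the reduction showing no optimal configuration places pebbles on hydrogen leaves, where deleting a leaf carrying one pebble changes the host tree and so the weight-$4$ obstruction must be rechecked on the pruned tree. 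Both are finite verifications consistent with the figures, so the gaps are of bookkeeping rather than of idea.
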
 
     
     \begin{proof}
	\begin{enumerate}
	\item[(i)] The molecule graph $C_3H_8 $ is constructed by adding a claw with three edges to the leaf of  graph $C_2H_6$. 
	Since the leaves in the graph $C_2H_6$ can receive at most 1 pebble. To solve the new claw vertices, we need at least one more pebble.
	By considering the 2-restricted optimal pebbling configurations of $C_3H_8$ in Fig \ref{15}, the result is followed.
	\item[(ii)] The molecule graph $C_4H_{10} $ is constructed by adding a claw with three edges to the leaf of  graph $C_3H_8$. 
	The rest of the proof is as before.
	
	\end{enumerate}
	\qed
    \end{proof}

    Pentane is an organic compound with the formula $C_5H_{12}$. It is classified as an alkane with five carbon atoms. Normal pentane is used as a nonpolar solvent in the laboratory and in industry as a reagent in the production of polystyrene foam. Pentane is also used in geothermal power units.
    
    Isobutane, also known as i-butane, 2-methylpropane or methylpropane, is a chemical compound with the molecular formula $C_4H_{10}$ and is an isomer of butane which we denote it by $IB$. Isobutane is the simplest alkane with a tertiary carbon bond, and is used as a feedstock in the petrochemical industry. Over the past decade, concerns about ozone depletion by freons have led to an increase in the use of isobutane as a refrigerant gas, particularly in household refrigerators and freezers.

    \begin{figure}[h!]
    	\begin{center}
    		\includegraphics[scale=0.35]{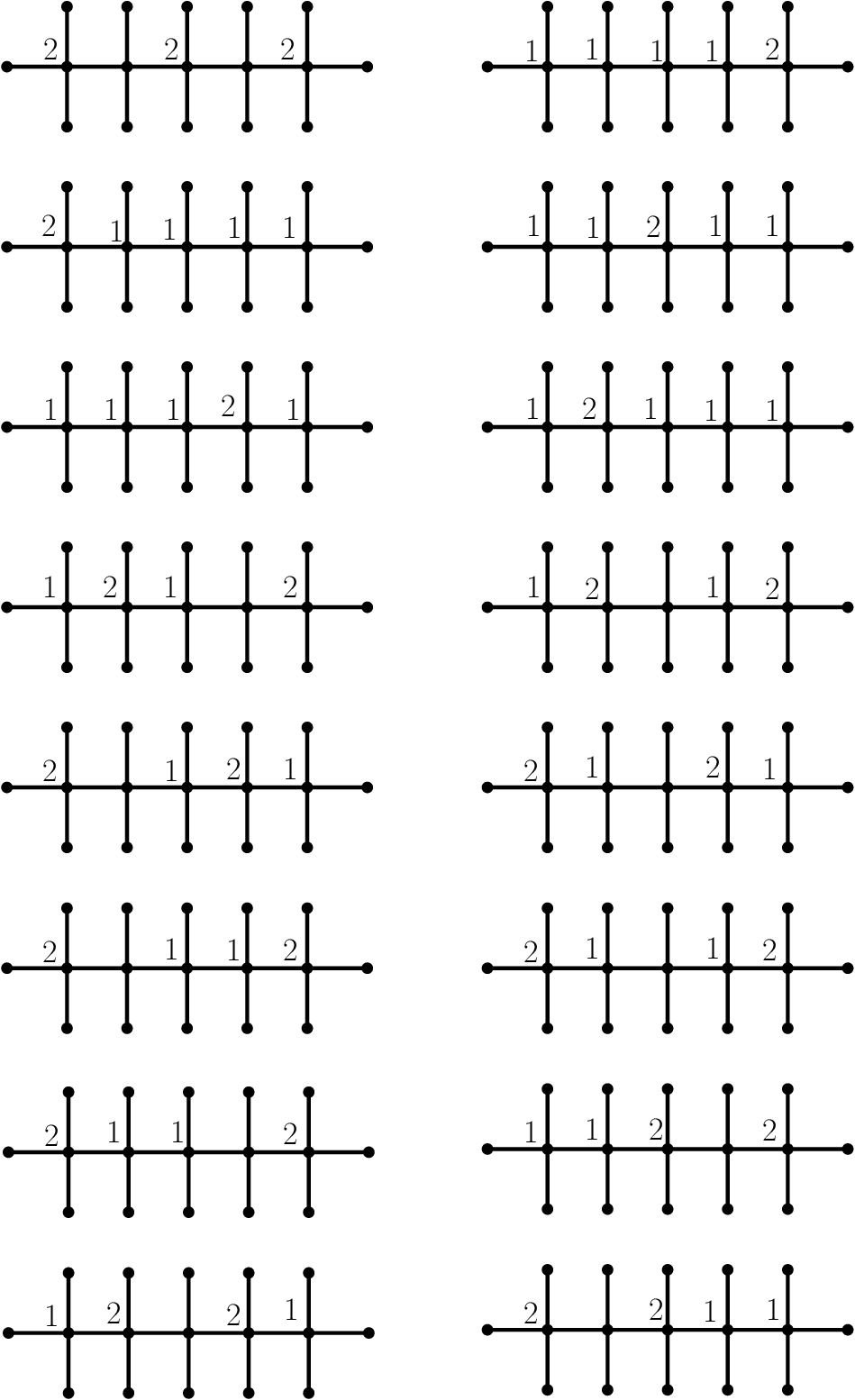}
    		\caption{The 2-restricted optimal pebbling configurations of $C_5H_{12}$}
    		\label{17}
    	\end{center}
    \end{figure}
    
    \begin{figure}[h!]
    	\begin{center}
    		\includegraphics[scale=0.30]{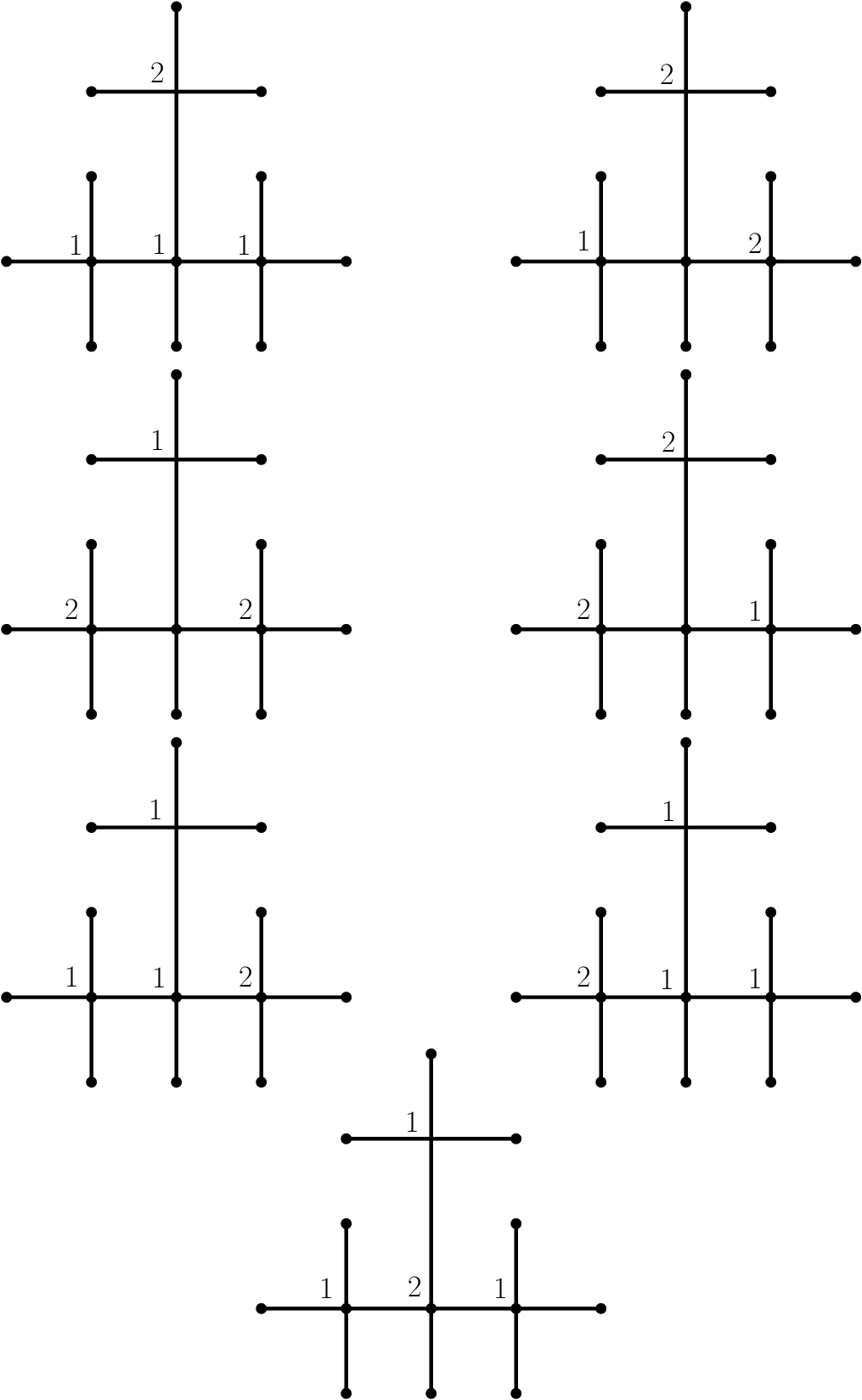}
    		\caption{seven 2-restricted optimal pebbling configurations of 2-methyl-propane}
    		% $C_4H_{10}$
    		\label{18}
    	\end{center}
    \end{figure}

    From the $2$-restricted optimal pebbling configuration of $C_5H_{12}$ and $IB$ which have shown in Figures \ref{17} and \ref{18}, we have the following theorem for the $2$-restricted optimal pebbling number 
    of  pentane and isobutane and for the number of their $2$-restricted optimal pebbling configurations.

    \begin{figure}[h!]
    	\begin{center}
    		\includegraphics[scale=0.30]{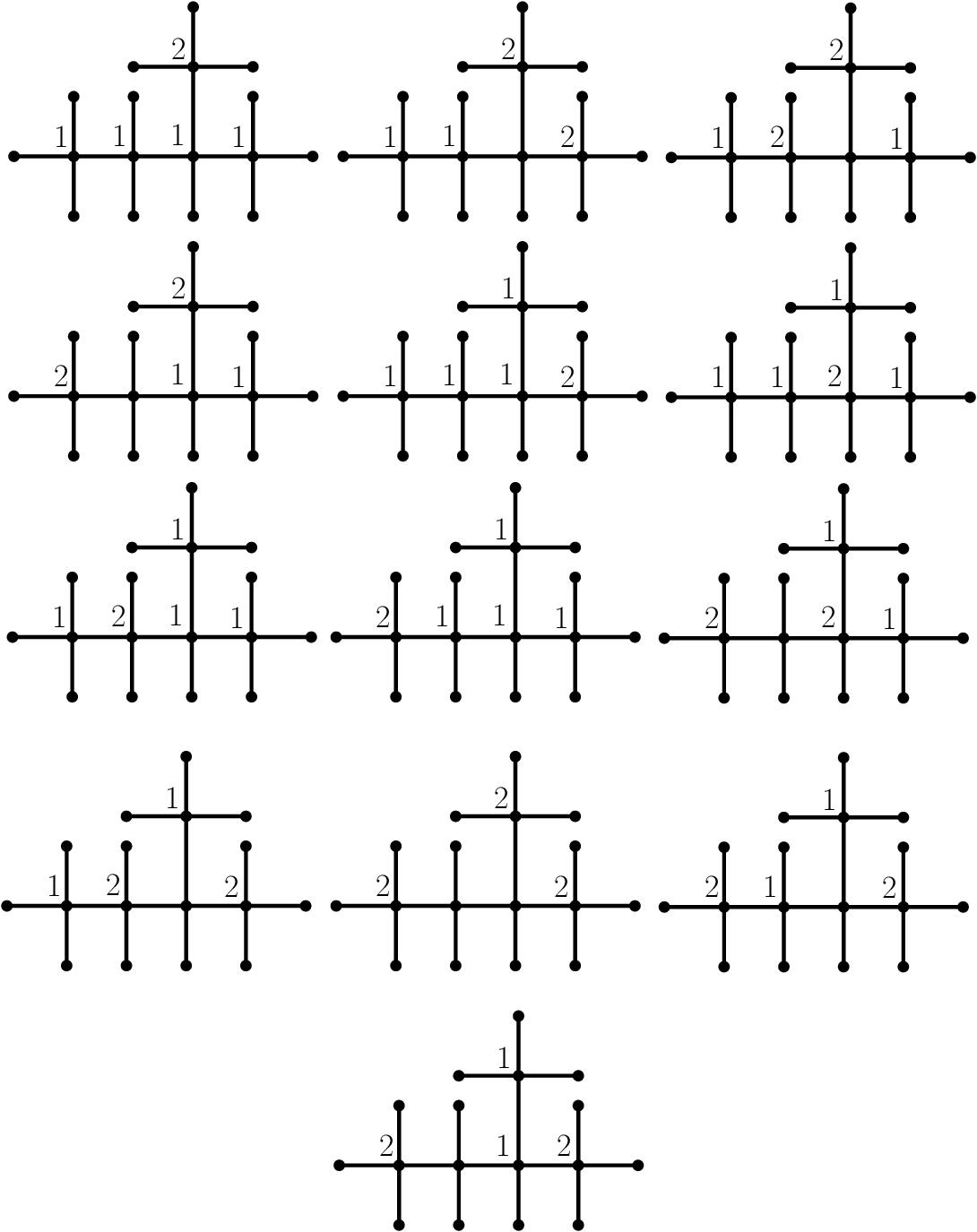}
    		\caption{Thirteen $2$-restricted optimal pebbling configurations of 2-methyl-butane}	\label{19}
    		%$C_5H{12}$
    	\end{center}
    \end{figure}
    
     \begin{figure}[h!]
     	\begin{center}
     		\includegraphics[scale=0.40]{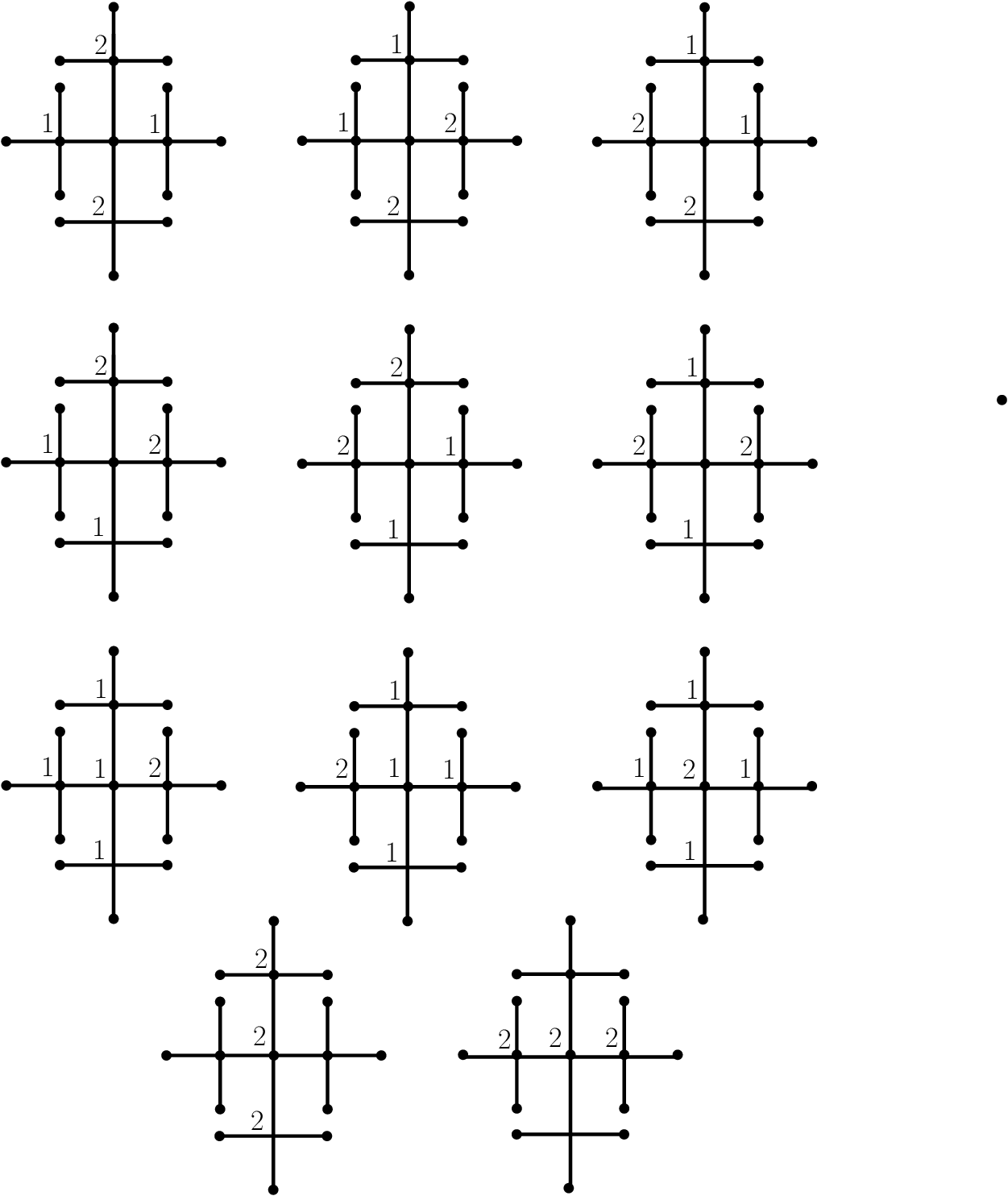}
     		\caption{The $2$-restricted optimal pebbling configurations of Neopentane}
     		\label{20}
     	\end{center}
     \end{figure}
    
    \begin{theorem}
    	\begin{enumerate}
   	\item[(i)] $\pi_2^*(C_5H_{12})=6$ and $P_2^*(C_5H_{12})=16$. 
    		
   \item[(ii)] $\pi_2^*(IB)=5$ and $P_2^*(IB)=7$. 
    	\end{enumerate}
    \end{theorem}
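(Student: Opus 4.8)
\medskip
\noindent The plan is to prove each equality in three steps — a lower bound on $\pi_2^*$, a $2$RPC realising it, and then an exhaustive count of the optimal configurations — handling pentane and isobutane in parallel. Fix notation. The molecule graph of $C_5H_{12}$ is the caterpillar whose spine is the carbon path $c_1c_2c_3c_4c_5$, where $c_1$ and $c_5$ each carry three pendant hydrogens and $c_2,c_3,c_4$ each carry two; thus $n=17$ and it has $\ell=12$ leaves. The molecule graph of $IB$ has a central carbon $c_0$ adjacent to a pendant hydrogen $h_0$ and to three carbons $c_1,c_2,c_3$, and each $c_i$ carries three pendant hydrogens $h_{i,1},h_{i,2},h_{i,3}$; thus $n=14$ and it has $\ell=10$ leaves. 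Both upper bounds are then immediate from $\pi_2^*(T)\le n-\ell+1$ (giving $\le 6$ and $\le 5$); equivalently one exhibits explicit solvable configurations of weights $6$ and $5$, namely those drawn in Figures~\ref{17} and~\ref{18}.

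For $\pi_2^*(C_5H_{12})\ge 6$ I would imitate the propane-to-butane step of the previous theorems. Write $C_5H_{12}$ as $C_4H_{10}$ with a claw of three edges attached at a terminal hydrogen $h$ of an end carbon, so that $h$ becomes the new methyl carbon $c_5$ and gains three new pendant hydrogens $g_1,g_2,g_3$. Suppose $f$ were a solvable $2$RPC of weight $5$ on $C_5H_{12}$. If $f(g_1)=f(g_2)=f(g_3)=0$ then the $g_j$ never take part in a pebbling move, so $f$ restricted to $V(C_4H_{10})$ is a solvable weight-$5$ $2$RPC on $C_4H_{10}$ that can, in addition, push two pebbles onto the vertex $h$; but one checks (distinguishing $f(h)=0,1,2$ and using that $C_4H_{10}$ and $C_4H_{10}-h$ both have $\pi_2^*=5$) that no weight-$5$ solvable $2$RPC of butane can deliver two pebbles to a single leaf — a contradiction. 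If instead some $f(g_j)\ge1$, a short count shows the restriction to $V(C_4H_{10})$ together with the at most one pebble that $c_5$ can route back has total effective weight at most $4<5=\pi_2^*(C_4H_{10})$, again a contradiction. Hence $\pi_2^*(C_5H_{12})=6$.

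For $\pi_2^*(IB)\ge 5$ I would argue directly, since the claw-attachment argument does not apply to $IB$. Suppose $f$ is a solvable $2$RPC on $IB$ with $w(f)=4$, and let $a_i$ be the number of pebbles $f$ puts on the arm $A_i=\{c_i,h_{i,1},h_{i,2},h_{i,3}\}$, so $a_1+a_2+a_3\le4$. With only four pebbles in play the vertex $c_0$ can never accumulate four pebbles (that would need eight pebbles' worth to converge on it), hence $c_0$ can deliver at most one pebble to any $c_i$; consequently, to put a pebble on a hydrogen of $c_i$ one must first put two pebbles on $c_i$ using the pebbles of $A_i$ itself plus at most one from $c_0$, and this already fails when $a_i=0$. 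So every arm has $a_i\ge1$, whence at least two arms have $a_i=1$; for such an arm the single pebble necessarily sits on $c_i$, and reaching its three hydrogens forces $c_0$ to be brought up to two pebbles, which with total weight $4$ is possible only if $f(c_0)=2$ (once every arm already holds a pebble, the leftover budget cannot feed $c_0$ a second one). But $f(c_0)=2$ leaves at most two pebbles for $h_0$ and the three arms together, so $a_1+a_2+a_3\le2$, contradicting $a_i\ge1$ for all $i$. Hence $\pi_2^*(IB)=5$.

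Finally, for the counts $P_2^*(C_5H_{12})=16$ and $P_2^*(IB)=7$ I would enumerate all solvable $2$RPCs of the optimal weight by first fixing the pebble pattern on the carbon skeleton and then, for each pattern, determining which placements of the remaining pebbles on the pendant hydrogens keep every vertex reachable — the decisive conditions being the deep leaves ($h_0$ and the methyl hydrogens in $IB$, and the hydrogens on $c_1$ and on $c_5$ in $C_5H_{12}$). Counting orbits under the natural symmetry ($S_3$ permuting the methyls of $IB$, and the end-swapping reflection $c_i\leftrightarrow c_{6-i}$ of $C_5H_{12}$) keeps the bookkeeping in hand, and the totals come out $7$ and $16$, the configurations being precisely those displayed in Figures~\ref{18} and~\ref{17}. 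I expect the only parts needing genuine care to be this enumeration and the two auxiliary facts used in the lower bounds — that $C_4H_{10}-h$ still has $\pi_2^*=5$, and that four pebbles cannot pile up four on the centre of $IB$; the remaining steps are the routine propagation arguments already used throughout this section.
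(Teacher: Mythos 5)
Your proposal is correct in substance but is doing far more work than the paper, whose entire proof of this theorem consists of the two sentences ``the molecule graph $C_5H_{12}$ (resp.\ $IB$) is constructed by adding a claw with three edges to the leaf of $C_4H_{10}$ (resp.\ $C_3H_8$)'', with both the optimality and the counts delegated to Figures~\ref{17} and~\ref{18}. For pentane you follow the same claw-attachment decomposition the paper gestures at, but you actually carry out the case analysis on the pebbles assigned to the three new hydrogens, which is what a complete lower-bound argument requires; for isobutane you depart from the paper's propane-plus-claw description and instead give a direct three-arm counting argument (no arm can be empty, at least two arms carry a single pebble which must sit on the carbon, and the centre cannot be fed a second pebble), which is arguably cleaner than trying to attach the claw at the middle carbon of propane where the generic ``leaf receives at most one pebble'' reasoning is weakest. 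Two small points to tighten: in the pentane case with $\sum_j f(g_j)\ge 1$, the ``effective'' configuration you build on $C_4H_{10}$ can pile more than two pebbles on the leaf $h$, so comparing its weight to $\pi_2^*(C_4H_{10})=5$ is not quite legitimate --- you need the unrestricted statement that no configuration of weight at most $4$ solves $C_4H_{10}$ (true, and easy to check directly since the routed pebbles all land on a single leaf), and the auxiliary fact that no optimal butane configuration delivers two pebbles to a prescribed terminal hydrogen is a finite check over the eight configurations of the preceding theorem that should be recorded explicitly. The enumeration yielding $16$ and $7$ is deferred in your write-up exactly as it is in the paper.
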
 
    
     \begin{proof}
	\begin{enumerate}
	\item[(i)] The molecule graph $C_5H_{12} $ is constructed by adding a claw with three edges to the leaf of  graph $C_4H_{10}$. 
	
	\item[(ii)] The molecule graph $IB $ is constructed by adding a claw with three edges to the leaf of  graph $C_3H_8 $. 
	
	\end{enumerate}
	\qed
    \end{proof}

    Isopentane, also called methylbutane or 2-methylbutane, is a branched-chain saturated hydrocarbon (an alkane) with five carbon atoms, with formula $C_5H_{12}$. Let to denote it by $IP$.  
    The $2$-restricted optimal pebbling number of $IP$ is 6. This alkane has thirteen $2$-restricted optimal pebbling configurations. These configurations have been shown in Figure \ref{19}.

    Neopentane, also called $2,2$-dimethylpropane, is a double-branched-chain alkane with five carbon atoms.
    Neopentane is the simplest alkane with a quaternary carbon, and has achiral tetrahedral symmetry. It is one of the three structural isomers with the molecular formula $C_5H_{12}$ (pentanes), the other two being $n$-pentane and isopentane. 
    Here, we denote this alkane with $NP$.  
    The 2-restricted optimal pebbling number of the graph of this alkane is $6$. This alkane has nine $2$-restricted optimal pebbling configurations. These configurations have been shown in Figure \ref{20}.

    \begin{theorem} 
  	\begin{enumerate}
  \item[(i)] $\pi_2^*(IP)=6$ and $P_2^*(IP)=13$. 
  \item[(ii)] $\pi_2^*(NP)=6$ and $P_2^*(NP)=11$. 
  	\end{enumerate}
  \end{theorem}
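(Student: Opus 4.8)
The plan is to establish the two equalities $\pi_2^*(IP)=6$ and $\pi_2^*(NP)=6$ first, and then to enumerate the optimal configurations. For the upper bounds I exhibit explicit weight-$6$ solvable $2$RPCs. Write the carbon skeleton of $IP$ as the path $C_1C_2C_3C_4$ with $C_5$ pendant at $C_2$, so that $C_1,C_4,C_5$ are the three terminal $\mathrm{CH}_3$ groups; placing two pebbles on each of $C_1,C_4,C_5$ is solvable, since every hydrogen of one of these claws is reached directly, while every remaining vertex (namely $C_2$, $C_3$, the hydrogen on $C_2$ and the two hydrogens on $C_3$) is reached by funnelling two pebbles to its parent carbon through the backbone. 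For $NP$, whose carbon skeleton is the star with centre $C_0$ and leaves $C_1,\dots,C_4$, place two pebbles on $C_0$ and one pebble on each $C_i$; a hydrogen of $C_i$ is reached by first moving the $C_0$-pebbles onto $C_i$. For the lower bounds I follow the reduction used above for $C_5H_{12}$ and $IB$: $NP$ is obtained from isobutane $IB$ (for which $\pi_2^*(IB)=5$) by replacing the single hydrogen on its central carbon by a claw, and since a leaf of $IB$ can receive at most one pebble in a solvable weight-$5$ configuration, whereas two pebbles are needed on the centre of the new claw to reach its three hydrogens, at least one more pebble is required, giving $\pi_2^*(NP)\ge 6$. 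The same argument applies to $IP$, viewed as butane $C_4H_{10}$ (with $\pi_2^*(C_4H_{10})=5$) with a claw attached at a hydrogen of $C_2$.

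For the counts the plan is a case analysis over the placement of the six pebbles, organised around the three (respectively four) pendant $\mathrm{CH}_3$-claws. Two reductions carry most of the weight: (a) no pendant claw may be left with zero pebbles, because delivering two pebbles to its carbon from the rest of the graph would force the adjacent backbone carbon to be charged to $4$, which is impossible on a budget of six pebbles once the other claws must also be served; and (b) if a claw carries exactly one pebble, that pebble must lie on the carbon, since a lone pebble on a hydrogen cannot help raise its carbon to $2$. Consequently, in both graphs every optimal $2$RPC places pebbles only on carbons, with one pebble on each claw-carbon and the two remaining pebbles distributed among the backbone carbon(s) and the claw-carbons. For $NP$ this gives exactly the three families --- both spare pebbles on $C_0$; one on $C_0$ and one on an outer carbon; the two on two distinct outer carbons --- so $P_2^*(NP)=1+4+\binom{4}{2}=11$. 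For $IP$ the same bookkeeping, now distinguishing whether a spare pebble falls on $C_2$ (one pendant hydrogen), on $C_3$ (two pendant hydrogens), or on a claw-carbon, and checking in each sub-case that the hydrogens on $C_2$ and on $C_3$ remain reachable, yields exactly the configurations displayed in Figure~\ref{19}.

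The real obstacle is the enumeration for $IP$ rather than the value of $\pi_2^*$. In contrast to $NP$, the two backbone carbons $C_2$ and $C_3$ are not interchangeable: $C_2$ carries one hydrogen and separates the claws $C_1$ and $C_5$, while $C_3$ carries two hydrogens and is the sole attachment point of the claw $C_4$. Hence sub-cases that symmetry would collapse in $NP$ are genuinely different here, and relayability through the backbone must be checked with care --- for instance a spare pebble on $C_3$ can be relayed to the $C_4$-claw while a spare pebble on $C_2$ cannot. I would run the count by first fixing the loads $(c_1,c_4,c_5)\in\{1,2\}^3$ of the three claw-carbons, then the loads of $C_2$ and $C_3$ subject to total weight $6$, and finally testing solvability of each resulting configuration. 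The subtle point inside reduction (a) is that a sufficiently heavily loaded claw could in principle push two pebbles back out, so the statement ``no claw is empty'' has to be argued globally against the budget of six pebbles, simultaneously for all claws at once.
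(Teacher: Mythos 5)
Your values $\pi_2^*(IP)=\pi_2^*(NP)=6$ are correct, your weight-$6$ configurations are genuinely solvable, and your overall strategy (build the molecule from a smaller alkane for the lower bound, then enumerate by case analysis on the claw loads) is the same one the paper gestures at --- though the paper's own ``proof'' is only a one-line remark about attaching a claw plus a reference to the figures, so you supply far more argument than it does. You even silently correct the paper's slip of deriving $NP$ from butane rather than from isobutane. The lower-bound reductions are at the paper's (informal) level of rigor and the underlying idea is sound.

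The genuine gap is in your reduction (a), and it breaks your count for $NP$. Consider the configuration on $NP$ with $2$ pebbles on the central carbon $C_0$ and $2$ pebbles on each of two outer carbons, say $C_1$ and $C_2$, and nothing elsewhere (weight $6$, two claws empty). This is solvable: the hydrogens of $C_1$ and $C_2$ are reached directly, and a hydrogen of the empty claw $C_3$ is reached by $C_0\to C_3$, then $C_1\to C_0$, $C_2\to C_0$, $C_0\to C_3$, $C_3\to H$, which places two pebbles on $C_3$ (and never exceeds two pebbles on any vertex even in intermediate steps). Your argument that ``the adjacent backbone carbon would have to be charged to $4$, which is impossible once the other claws must also be served'' conflates serving all targets with one fixed pebble supply and serving each target by its own move sequence: solvability is a per-vertex condition, so the two pebbles on $C_1$ may serve $C_1$'s hydrogens in one scenario and be exported to $C_0$ in another. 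Hence the classification ``one pebble on each claw-carbon'' is false for $NP$, your tally $1+4+\binom{4}{2}=11$ omits the $\binom{4}{2}=6$ configurations of the above type, and the claim ``no claw is empty'' cannot be argued ``globally against the budget'' as you propose --- it is simply not true here. (That your flawed lemma reproduces the paper's figure $11$ is not reassuring: the paper gives no argument for its enumeration, and its own prose says ``nine'' where the theorem says $11$, so the target number itself should be treated with suspicion rather than reverse-engineered.) For $IP$ the empty-claw exclusion does happen to hold, because feeding an empty terminal claw through two backbone carbons is too lossy on a budget of six; but that has to be checked by the accumulation computation on the path skeleton, not by the global-budget heuristic, and your $IP$ count is in any case only a plan, not a completed enumeration.
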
 
  
   \begin{proof}
	\begin{enumerate}
	\item[(i)] The molecule graph $IP $ is constructed by adding a claw with three edges to the leaf of  graph $C_4H_{10} $. 
	
	\item[(ii)]  The molecule graph $NP $ is constructed by adding a claw with three edges to the leaf of  graph $C_4H_{10} $. 
	
	\end{enumerate}
	\qed
    \end{proof}

   \section{Conclusion}

 We investigated the $2$-restricted optimal pebbling number of 
 trees $T$ with distinguishing number two, i.e., $D(T)=2$ and radius at most $2$. We observed that $D(T)\leq \pi_2^*(T)$. We also enumerated the $2$-restricted optimal pebbling configurations of these kind of trees. 
 We studied the $2$-restricted optimal pebbling number of some graphs that are of importance in chemistry such as
 some alkanes. We state some open problems:
 
 \begin{enumerate} 
 	\item What is the $2$-restricted optimal pebbling number of another molecules and dendrimers which are important in Chemistry. 
 	
 	\item What are the bounds for the $2$-restricted optimal pebbling number of a graph based on its distinguishing number. 
 	
 	\end{enumerate}

\end{document}